\documentclass{article}

\usepackage{microtype}
\usepackage[titletoc]{appendix}


\usepackage{graphicx,color}
\usepackage{amsmath,amsfonts,amsthm,bm,amssymb}
\usepackage[a4paper]{geometry}



\newcommand{\Z}{\mathbb{Z}} 
\newcommand{\C}{\mathbb{C}} 
\newcommand{\N}{\mathbb{N}} 
\newcommand{\M}{\mathbb{M}}

\newcommand{\J}{\mathcal{J}} 

\newcommand{\imagunit}{\mathrm{i}}
\newcommand{\twopii}{2 \pi \imagunit \,}

\newcommand{\shinv}{\mathrm{shinv}}

\DeclareSymbolFont{bbold}{U}{bbold}{m}{n}
\DeclareSymbolFontAlphabet{\mathbbold}{bbold}
\newcommand{\ind}[1]{\mathbbold{1}_{#1}}

\newcommand*{\link}[1]{\eqref{#1}}                                      
\newcommand*{\abs}[1]{\left| #1 \right|}                                
\newcommand*{\nach}{\rightarrow}                                        
\newcommand*{\norm}[1]{\left\| #1 \right\|}                             
\newcommand*{\sep}{\; \vrule \;}                                       
\newcommand{\ie}{i.e., }
\renewcommand*{\S}{\mathcal{S}}                                         
\newcommand*{\distr}[2]{\left\langle #1, #2 \right\rangle}              

\newcommand*{\SI}{\mathfrak{S}}                                         
\newcommand*{\id}{\mathrm{id}}                                          
\newcommand*{\0}{\mathcal{O}}                                           

\usepackage[draft=false,colorlinks=true,bookmarksnumbered,linkcolor=black, citecolor=black, urlcolor=black]{hyperref}

\usepackage{aliascnt}

\theoremstyle{plain}
\newtheorem{theorem}{Theorem}[section]

\newaliascnt{lem}{theorem}
\newtheorem{lemma}[lem]{Lemma}

\aliascntresetthe{lem}
\newaliascnt{prop}{theorem}
\newtheorem{proposition}[prop]{Proposition}

\aliascntresetthe{prop}
\newaliascnt{cor}{theorem}
\newtheorem{corollary}[cor]{Corollary}

\aliascntresetthe{cor}

\theoremstyle{definition}

\newaliascnt{ex}{theorem}

\aliascntresetthe{ex}
\newenvironment{example}
  {\pushQED{\qed}\examplex}
  {\popQED\endexamplex}

\theoremstyle{remark}

\newaliascnt{rem}{theorem}

\aliascntresetthe{rem}
\newenvironment{remark}
  {\pushQED{\qed}\remarkx}
  {\popQED\endremarkx}


\newcommand{\setu}{\mathfrak{u}}


\newcommand{\rd}{\,\mathrm{d}} 

\newcommand{\bszero}{\boldsymbol{0}} 
\newcommand{\bst}{\boldsymbol{t}}    
\newcommand{\bsh}{\boldsymbol{h}}    
\newcommand{\bsk}{\boldsymbol{k}}    
\newcommand{\bsx}{\boldsymbol{x}}    
\newcommand{\bsy}{\boldsymbol{y}}    
\newcommand{\bsz}{\boldsymbol{z}}    
\newcommand{\bsDelta}{\boldsymbol{\Delta}}    


\newcommand{\NR}[1]{\mu_R(#1)}
\newcommand{\NN}{\eta}

\newcommand{\tpmod}[1]{~(\operatorname{mod}{#1})} 

%
\def\citep#1#2{\cite[{#1}]{#2}}

\newcommand{\RefEq}[1]{~\textup{(\ref{#1})}}

\newcommand{\RefSec}[1]{Section~\textup{\ref{#1}}}
\newcommand{\RefThm}[1]{Theorem~\textup{\ref{#1}}}
\newcommand{\RefProp}[1]{Proposition~\textup{\ref{#1}}}
\newcommand{\RefLem}[1]{Lemma~\textup{\ref{#1}}}
\newcommand{\RefCol}[1]{Corollary~\textup{\ref{#1}}}
\newcommand{\RefEx}[1]{Example~\textup{\ref{#1}}}
\newcommand{\RefRem}[1]{Remark~\textup{\ref{#1}}}

\begin{document}

\title{Rank-$1$ lattice rules for multivariate integration \\ in spaces of permutation-invariant functions: \\
Error bounds and tractability\footnote{Preprint of the corresponding article published within \emph{Advances in Computational Mathematics}. The final publication is available at \href{http://link.springer.com}{link.springer.com}, see \href{http://dx.doi.org/10.1007/s10444-015-9411-6}{DOI:10.1007/s10444-015-9411-6}.}}
\author{
Dirk Nuyens\thanks{Department of Computer Science, KU Leuven, Celestijnenlaan 200A, 3001 Heverlee, Belgium.}~\thanks{Email: dirk.nuyens@cs.kuleuven.be.}
\and Gowri Suryanarayana\footnotemark[2]~\thanks{Email: gowri.suryanarayana@cs.kuleuven.be}
\and Markus Weimar\thanks{Corresponding author. Philipps-University Marburg, Faculty of Mathematics and Computer Science, Workgroup Numerics and Optimization, Hans-Meerwein-Stra{\ss}e, Lahnberge, 35032 Marburg, Germany. Email: weimar@mathematik.uni-marburg.de.}
}

\maketitle

\begin{abstract}
\noindent We study multivariate integration of functions that are invariant under permutations (of subsets) of their arguments.
We find an upper bound for the $n$th minimal worst case error and show that under certain conditions, it can be bounded independent of the number of dimensions. In particular, we study the application of unshifted and 
randomly shifted rank-$1$ lattice rules in such a problem setting.
We derive conditions under which multivariate integration is polynomially or strongly polynomially tractable with
the Monte Carlo rate of convergence $\0(n^{-1/2})$. Furthermore, we prove that those tractability results can be achieved with shifted lattice rules and that the shifts are indeed necessary. Finally, we show the existence of rank-$1$ lattice rules whose worst case error on the permutation- and shift-invariant spaces converge with (almost) optimal rate.
That is, we derive error bounds of the form $\0(n^{-\lambda/2})$ for all $1 \leq \lambda < 2\alpha$, where $\alpha$ denotes the smoothness of the spaces.

\smallskip
\noindent \textbf{Keywords:} \textit{Numerical integration, Quadrature, Cubature, Quasi-Monte Carlo methods, Rank-$1$ lattice rules. 
}

\smallskip
\noindent \textbf{Subject Classification:} 65D30, 
65D32, 
65C05,  
65Y20, 
68Q25,  
68W40.
\end{abstract}

\section{Introduction and main results}\label{sect:Intro}
The approximation of multivariate integrals is a very old and popular topic of research.
In modern science the \emph{efficient} numerical treatment of very high-dimensional integration problems becomes
more and more important. Therefore one seeks for algorithms which satisfy error bounds with a higher-order rate of convergence and a moderate dependence on the dimension at the same time.
By now it is well-known that, when working with a huge number of dimensions, some additional a priori knowledge on the integrands under consideration is needed in order to reduce the information complexity and thus the computational hardness of such problems.
Usually this additional knowledge is modeled by the use of function spaces endowed with weighted norms that allow to control the influence of different (groups of) variables on the functions one likes to integrate; see \cite{DKS13} for a survey.
Another kind of additional knowledge, given in terms of permutation-invariance conditions, was proposed recently; see \cite{W12,W14}.
 In this paper we exploit such conditions in order to bound the worst case error of general cubature methods 
 for the integration of periodic functions defined on the $d$-dimensional unit cube, where $d\in\N$ can be arbitrary large. Besides proving the existence of good quasi-Monte Carlo (QMC) algorithms 
 based on well-known averaging techniques we focus on shifted and unshifted rank-$1$ lattice rules.
In contrast to Monte Carlo algorithms which use $n$ independent random samples those integration methods are based on very structured, deterministic point sets.
Our setting is motivated by problems from computational quantum physics. Recently it has been shown that the rate of convergence for solving the electronic Schr\"{o}dinger equation of $N$ electron systems does not depend on the number of electrons \cite{Y10}. This is due to the intrinsic property of the system that electronic wavefunctions are antisymmetric with respect to the exchange of electrons with the same spin. We observe from \cite{W12} that finding the approximate solution to such problems involves the calculation of inner products of two antisymmetric functions, i.e., the integration of permutation-invariant functions.

We now briefly describe our main results and the organization of the material.
To begin with, in \autoref{sect:setting} we present the setting we are going to study. Here we introduce the reproducing kernel Hilbert spaces (RKHSs), as well as their permutation-invariant subspaces, our integrands come from. We recall the definition of (weighted) cubature rules and their worst case errors. Finally, we briefly review some well-known concepts from information-based complexity.
\autoref{sect:avgbounds} then deals with existence results obtained by averaging. 
In particular, in \autoref{thm:tractability} we prove that there are (equal weight) QMC rules which satisfy error bounds that decay with the Monte Carlo rate of convergence $\0(n^{-1/2})$ while the implied constant grows only polynomially with the dimension~$d$ provided there is sufficient permutation-invariance. 
Under fairly moderate assumptions on the underlying function space, these error bounds do not depend on $d$ at all. 
That is, e.g., for the fully permutation-invariant problem we prove strong polynomial tractability (e.g., in periodic Sobolev spaces). 
We contrast our results with well-known tractability assertions for related integration problems defined on weighted spaces.
Finally, \autoref{sect:lattice_rules} is devoted to the study of rank-$1$ lattice rules. It contains our main results. In \autoref{subsect:unshifted_lattice rules} we start by proving exact error formulas for unshifted rules which imply lower  bounds showing that no such rule can attain the generic upper bounds stated in \autoref{thm:tractability}. Consequently (independently of the problem parameters) this class of algorithms is too small to obtain strong polynomial tractability.
Therefore, in \autoref{subsect:shifted_lattice_rules}, we turn to (randomly) shifted rank-$1$ lattice rules which are related to certain permutation- and shift-invariant RKHSs. We derive exact expressions for the associated kernels and for the root mean squared worst case error $E(Q_n(\bsz))$ (w.r.t.\ the random shift) of the integration algorithms under consideration. 
These formulas then lead us to lower bounds for $E(Q_n(\bsz))$ and to the observation that shifted rules outperform their unshifted counterparts. Finally, our main result (\autoref{thm:existence}) states that there exist generating vectors $\bsz^*$ such that (on average) the error of the shifted rank-$1$ lattice rule $Q_n(\bsz^*)+\bm{\Delta}$ is bounded by $\0(n^{-\lambda/2})$, where $\lambda/2$ can be chosen arbitrarily close to $\alpha$ (the smoothness parameter of the space under consideration and the optimal rate of convergence for  these rules). 
For $\lambda=1$ the bounds proven in \autoref{thm:existence} resemble the generic upper bounds given in \autoref{sect:avgbounds}. 
Hence, under suitable conditions shifted rank-$1$ lattice rules imply strong polynomial tractability for the integration of permutation-invariant functions.
In \autoref{sect:appendix} we conclude the paper with an appendix which contains the proofs of some technical lemmas needed for our derivation.


\section{Setting}\label{sect:setting}
\subsection{Subspaces of permutation-invariant functions}\label{subsect:subspaces}
We study multivariate integration 
\begin{gather}\label{Int}
		\mathrm{Int}_d f = \int_{[0,1]^d} f(\bsx) \rd \bsx
\end{gather}
for functions from subspaces of some Hilbert space of periodic functions 
\begin{align*}
 		F_{d}(r_{\alpha,\bm{\beta}}) = \left\{ f:[0,1]^d \rightarrow \mathbb{C} 
 				\sep f\in L_2 \, \text{ with } \, \norm{f}_{d}^2 = \sum_{\bsh \in \Z^d} \abs{\widehat{f}(\bsh)}^2 r_{\alpha,\bm{\beta}}(\bsh) < \infty \right\}.
\end{align*}
Hence, functions in $f\in F_{d}(r_{\alpha,\bm{\beta}})$ can be represented 
in terms of an absolutely convergent Fourier expansion and their Fourier coefficients
\begin{align*}
		\widehat{f}(\bsh) = \distr{f}{\exp(\twopii \bsh \cdot \cdot)}_{L_2} 
		= \int_{[0,1]^d} f(\bsx) \, \exp(-\twopii \bsh \cdot \bsx) \rd\bsx, 
\end{align*}
$\bsh = (h_1,\ldots,h_d)\in\Z^d$, decay 
faster than
$r_{\alpha,\bm{\beta}}(\bsh)^{-1/2}$.
Here $r_{\alpha,\bm{\beta}} \colon \Z^d \nach (0,\infty)$
is a $d$-fold tensor product involving some function $R\colon [1,\infty) \nach (0,\infty)$ and 
a tuple $\bm{\beta} = (\beta_0,\beta_1)$ of positive parameters
such that
\begin{gather*}
		r_{\alpha,\bm{\beta}}(\bsh) 
		= \prod_{\ell=1}^d \left( \delta_{0,h_\ell} \, \beta_0^{-1} + (1-\delta_{0,h_\ell}) \, \beta_1^{-1} \, R(\abs{h_\ell})^{2\alpha} \right), 
		\quad \bsh\in\Z^d.
\end{gather*}
Therein $\delta$ denotes the Kronecker delta (i.e., $\delta_{i,j}$ equals one if $i=j$ and zero otherwise) and the parameter $\alpha \geq 0$ describes the smoothness.
Throughout the whole paper we assume that 
\begin{equation*}
	\frac{1}{c_R} \,R(m) \leq \frac{R(n m)}{n} \leq R(m)
	\qquad \text{for all} \quad n \geq 1, \quad m\in\N, \quad \text{and some} \quad c_R\geq 1.
\end{equation*}
Moreover, we assume that
$(R(m)^{-1})_{m\in\N} \in \ell_{2\alpha}$, \ie
\begin{gather}\label{summableR}
		\NR{\alpha}=\sum_{m=1}^\infty \frac{1}{R(m)^{2\alpha}} < \infty
\end{gather}
(Note that the latter conditions particularly imply that $R(m)\sim m$ and $\alpha>1/2$).

For a detailed discussion of $F_{d}(r_{\alpha,\bm{\beta}})$ 
we refer to Novak and Wo{\'z}niakowski~\cite[Appendix~A.1]{NW08}
but we want to stress the point that some well-known spaces are covered by this definition. 

\begin{example}\label{ex:spaces}
\noindent
\begin{itemize}
\item[(i)] For $\beta_0=\beta_1=1$, $\alpha > 1/2$ and $R(m)=m$, $m\in\N$, we obtain the \emph{classical Korobov space},
where $r_{\alpha,\bm{\beta}}(\bsh) = \prod_{\ell=1}^d \max\{1, |h_\ell|\}^{2\alpha}$.
\item[(ii)] If we change our definition of $R$ to $R(m)=2\pi m$, $m\in\N$, and assume that $\alpha\in\N$ 
then, for any positive $\beta_0$ and $\beta_1$, we have a norm which resembles that of the \emph{unanchored Sobolev space} restricted to periodic functions where the norm for $d=1$ can also be written as
\begin{gather*}
		\norm{f}_1^2 = \beta_0^{-1} \abs{\int_0^1 f(x) \rd x}^2 + \beta_1^{-1} \int_0^1 \abs{f^{(\alpha)}(x)}^2 \rd x.
\end{gather*}
\item[(iii)] Also the periodic \emph{Sobolev space of dominating mixed smoothness} 
$S_2^\alpha W$ studied, e.g., in Ullrich~\cite{Tino}, is covered.
To this end, let $\beta_0=\beta_1=1$, $\alpha > 1/2$  
and $R(m)=(1+m^2)^{1/2}$ for $m\in\N$. Then $c_R=\sqrt{2}$ 
and
\begin{gather*}
		\norm{f}_d^2 = \sum_{\bsh \in \Z^d} \left( \abs{\widehat{f}(\bsh)}  \prod_{\ell=1}^d (1+\abs{h_\ell}^2)^{\alpha/2} \right)^2. \qedhere
\end{gather*}
\end{itemize}
\end{example}

Due to $R(m)\sim m$ it is known that if $\alpha>1/2$ then 
$F_d(r_{\alpha,\bm{\beta}})$ is a $d$-fold tensor product 
of some univariate reproducing kernel Hilbert space (RKHS)
equipped with the inner product
\begin{align*}
		\distr{f}{g} 
		= \sum_{\bsh \in \Z^d} r_{\alpha,\bm{\beta}}(\bsh) \, \widehat{f}(\bsh)\, \overline{\widehat{g}(\bsh)}.
\end{align*}
Thus, $F_d(r_{\alpha,\bm{\beta}})$ itself is also a RKHS, where the respective $d$-variate kernel is given by
\begin{gather}\label{def_kernel}
		K_d(\bsx,\bsy) 
		= \sum_{\bsh \in \Z^d} r_{\alpha,\bm{\beta}}(\bsh)^{-1} \exp \left(\twopii \bsh \cdot (\bsx-\bsy) \right).
\end{gather}
A comprehensive discussion of RKHSs can be found in Aronszajn~\cite{A50}.
For the latest state of the art in integration theory related to RKHSs we refer the reader
to the textbook of Dick and Pillichshammer \cite{DP10}, as well as to the survey article of
Dick, Kuo, and Sloan \cite{DKS13} and the references therein. 
A detailed introduction to special integration methods (such as lattice rules discussed below) can also be found in the monographs of Sloan and Joe~\cite{SJ94} and Novak and Wo{\'z}niakowski~\cite{NW10}, as well as in the review \cite{Nuy2014}. 

In what follows we focus on the integration problem restricted to 
subsets of \emph{$I_d$-permutation-invariant functions} $f\in F_d(r_{\alpha,\bm{\beta}})$ 
for some coordinate sets $I_d \subseteq \{1,\ldots,d\}$; see \cite{W12,W14}.
That is, we impose the additional condition that $f$ is invariant under all permutations of the variables with indices in $I_d$:
\begin{align*}
 		f(\bsx) &= f(P(\bsx)) 
 		\qquad \text{for all } \quad \bsx\in[0,1]^d \quad \text{ and each } \quad P \in \S_d,
\end{align*}
where
\begin{align*}
		\S_d 
		&= \S_{\{1,\ldots,d\}}(I_d) 
		= \left\{ P\colon\{1,\ldots,d\}\nach\{1,\ldots,d\} \sep P \text{ bijection such that } P\big|_{\{1,\ldots,d\}\setminus I_d} = \id \right\}.  
\end{align*}
(Note that this set always contains at least the identity permutation.)
These subspaces will be denoted by $\SI_{I_d}(F_d(r_{\alpha,\bm{\beta}}))$.
For the extremal case of \emph{fully permuta\-tion-invariant functions} we use the 
shorthand
$\SI(F_d(r_{\alpha,\bm{\beta}}))$.
It is known that if $I_d = \{i_1, i_2, \ldots, i_{\# I_d}\}$ 
then the subset of symmetrized and scaled basis functions of $F_d(r_{\alpha,\bm{\beta}})$ given by
\begin{gather*}
 		\phi_{\bsk}(\bsx) 
 		= \sqrt{\frac{r^{-1}_{\alpha,\bm{\beta}}(\bsk)}{\#\S_d \; \M_d(\bsk)!}} \sum_{P \in \S_{d}} \exp(\twopii P(\bsk)\cdot \bsx)
\end{gather*}
with 
\begin{gather}\label{def_nabla}
 		\bsk \in \nabla_d 
 		= \nabla_{\{1,\ldots,d\}}(I_d)
 		= \left\{\bsk \in \Z^d \sep k_{i_1} \leq k_{i_2} \leq \cdots \leq k_{i_{\# I_d}} \right\},
\end{gather}
builds an orthonormal basis of $\SI_{I_d}(F_d(r_{\alpha,\bm{\beta}}))$; see \cite{W12} for details.
Here
\begin{align*}
  \M_d(\bsk)!
  =
  \M_{\{1,\ldots,d\}}(\bsk, I_d)!
  &=
  \#\{ P \in \S_{d} \sep P(\bsk) = \bsk \}
\end{align*}
accounts for the repetitions of indices in the multi-index $\bsk$, giving rise to repetitive permutations. 
It immediately follows that for every function $G\colon \Z^d\nach \C$ it holds
\begin{equation}\label{eq:transform}
	\sum_{\bsh \in \Z^d} G(\bsh)
	= 
	\sum_{\bsk \in \nabla_d} \frac{1}{\M_d(\bsk)!} \sum_{P \in \S_d} G(P(\bsk))
	.
\end{equation}
Since $\SI_{I_d}(F_d(r_{\alpha,\bm{\beta}}))$ is equipped with the same norm as the entire space 
$F_d(r_{\alpha,\bm{\beta}})$ it is again a RKHS.
Moreover, it can be easily checked that its reproducing kernel is given by
\begin{align}
		K_{d,I_d}(\bsx,\bsy) 
		&= \frac{1}{\left( \# \S_d \right)^2} \sum_{P,P' \in \S_d} K_d(P(\bsx),P'(\bsy)) 
		 = \frac{1}{\#\S_d} \sum_{P \in \S_d} K_d(P(\bsx),\bsy) 
		\nonumber\\
        &= \sum_{\bsk \in \nabla _d} \frac{r^{-1}_{\alpha,\bm{\beta}}(\bsk)}{\M_d(\bsk)!} \frac{1}{\#\S_d} \sum_{P,P' \in \S_d} \exp(\twopii \bsk \cdot (P(\bsx) - P'(\bsy))) \nonumber \\
		&= \sum_{\bsh \in \Z^d} \frac{r^{-1}_{\alpha,\bm{\beta}}(\bsh)}{\#\S_d} \sum_{P \in \S_d} \exp(\twopii \bsh \cdot (P(\bsx)-\bsy)) \label{sym_kernel_2} 
\end{align}
for $\bsx,\bsy\in[0,1]^d$. Finally, it is known that (using a suitable rearrangement of the coordinates) the space
$\SI_{I_d}(F_d(r_{\alpha,\bm{\beta}}))$ can be seen as the tensor product of
the fully permutation-invariant subset
of the $\# I_d$-variate space with the entire $(d-\# I_d)$-variate space,
\ie
\begin{gather*}
		\SI_{I_d}(F_d(r_{\alpha,\bm{\beta}})) 
		= \SI(F_{\# I_d}(r_{\alpha,\bm{\beta}})) \otimes F_{d-\# I_d}(r_{\alpha,\bm{\beta}}).
\end{gather*}
Hence, also the reproducing kernel factorizes to
\begin{gather}\label{tensor_kernel}
		K_{d,I_d} = K_{\# I_d, \{1,\ldots,\# I_d\}} \otimes K_{d-\#I_d}.
\end{gather}

\begin{remark}
Some comments are in order.
\begin{itemize}
\item[(i)] Note that our theory can be extended easily to spaces which yield permutation-invariance 
with respect to at least two disjoint subsets of coordinates $I_d$ and $J_d$. 
Similar spaces play some role for approximation problems from computational practice, e.g., related to the electronic Schr\"{o}dinger equation; see \cite{W12}.
\item[(ii)] We do not consider anisotropic spaces $F_d(r_{\alpha,\bm{\beta}})$ where the parameters $\beta_1$
in $r_{\alpha,\bm{\beta}}$ are allowed to depend on the index of the respective variable.
Although this approach is reasonable to model the influence of different variables $x_j$ on $f(\bsx)$,
when $j \in I_d$
the effect would be averaged out by the application of the permutations $P\in\S_d$ such that
finally all variables in $I_d$  
would be equally important.
The same result can be reached by taking appropriate constant values of $\beta_1$.
For $j \notin I_d$ the standard results apply and so we do not study this here; see, e.g., \cite{NW10} or Sloan and Wo{\'z}niakowski~\cite{SW98}.
\item[(iii)] In this paper we mainly concentrate on spaces with weight parameters $\beta_1$ that are independent of the dimension $d$. 
For tractability it turns out that this case is sufficient, provided that the number of permutation-invariance conditions (i.e., the cardinality of the sets $I_d$) is large enough. Occasionally we briefly describe how to proceed if this major assumption is violated.
\hfill$\qedhere$
\end{itemize}
\end{remark}

\subsection{Algorithms, worst case errors and notions of tractability}
We like to approximate the integral \link{Int} by some weighted cubature rule
\begin{gather}\label{QMC}
		Q_{d,n}(f) 
		= Q_{d,n}\!\left(f; \bm{t}^{(0)},\ldots,\bm{t}^{(n-1)}, w_0,\ldots,w_{n-1}\right) 
		= \frac{1}{n} \sum_{j=0}^{n-1} w_j \, f\!\left(\bm{t}^{(j)}\right),
\end{gather}
$d,n\in\N$, that samples $f$ at the points $\bm{t}^{(j)}\in[0,1]^d$, $j=0,\ldots,n-1$, where
the weights $w_j$ are well-chosen real numbers.
If $w_0=\cdots=w_{n-1}=1$ and all $\bm{t}^{(j)}$ are chosen deterministically then $Q_{d,n}$ is the classical \emph{quasi-Monte Carlo} (QMC) rule which we will denote by $\mathrm{QMC}_{d,n} = \mathrm{QMC}_{d,n}(\,\cdot\,; \bm{t}^{(0)},\ldots,\bm{t}^{(n-1)})$.
This construction is inspired by the standard \emph{Monte Carlo} algorithm $\mathrm{MC}_{d,n}$ that formally equals $\mathrm{QMC}_{d,n}$ with the difference that here the sample points $\bm{t}^{(j)}$, $j=0,\ldots,n-1$, are independent and identically uniformly distributed in $[0,1]^d$.

Provided that $K$ is the reproducing kernel of some
RKHS~$H_d$ of functions on $[0,1]^d$ the squared worst case error of $Q_{d,n}$ is then given by, see, e.g., Hickernell and Wo{\'z}niakowski~\cite{HW00},
\begin{align}
		e^{\mathrm{wor}}(Q_{d,n}; H_d)^2 
		&= \left( \sup_{\substack{f\in H_d \\ \norm{f}_{d}\leq 1}} \abs{\mathrm{Int}_d f - Q_{d,n}(f)} \right)^2  \nonumber\\
		&= \int_{[0,1]^{2d}} K(\bsx,\bsy) \rd\bsx \rd\bsy - \frac{2}{n} \sum_{j=0}^{n-1} w_j \int_{[0,1]^d} K\!\left(\bsx,\bm{t}^{(j)}\right) \rd\bsx  \nonumber\\
		&\qquad \qquad+ \frac{1}{n^2} \sum_{j,\ell=0}^{n-1} w_j w_\ell K\!\left(\bm{t}^{(j)},\bm{t}^{(\ell)} \right).
		\label{eq:e2-K}
\end{align}
In what follows, we want to bound the $n$th \emph{minimal worst case error} 
\begin{gather}\label{def_minerror}
		e(n,d; H_d) = \inf_{A_{n,d}} e^{\mathrm{wor}}(A_{n,d}; H_d)
\end{gather}
for integration on $H_d$.
Here the infimum is taken with respect to some class of algorithms $A_{n,d}$ which use at most 
$n$ samples of the input function. 

\begin{remark}
We stress that due to results of Smolyak and Bakhvalov we can restrict ourselves to linear, non-adaptive
cubature rules $Q_{d,n}$ of the form \link{QMC}, without loss of generality. For details and
further references see, e.g., Sloan and Wo{\'z}niakowski~\cite[Remark~1]{SW97} or \cite[Section~4.2.2]{NW08}.
\end{remark}

In this context, we briefly recall the concepts of tractability that will be used later on. For this purpose we rely on the notions described in~\cite{NW08}. Let $n = n(\varepsilon, d)$ denote the \emph{information complexity} with respect to the \emph{normalized error criterion}. That is, the minimal number of function values necessary
to reduce the \emph{initial error} $e(0,d; H_d)$ by a factor of $\varepsilon \in (0,1)$, in the $d$-variate case. 
Then a problem is said to be \emph{polynomially tractable} if $n(\varepsilon ,d)$ is upper bounded by some polynomial in $\varepsilon^{-1}$
and $d$, \ie if there exist constants $C,p>0$, and $q \geq 0$  such that for all $d\in\N$ and every $\varepsilon\in(0,1)$
\begin{align}
\label{trac_defn}
	 n(\varepsilon ,d) \le C\, d^q \, \varepsilon^{-p}.
\end{align}
If this bound is independent of $d$, \ie if we can take $q=0$, then the problem is said to be \emph{strongly polynomially tractable}. In contrast, problems are called \emph{polynomially intractable} if \eqref{trac_defn} does not hold for any such choice of $C,p$, and $q$.
Finally, a problem is said to be \emph{weakly tractable} if its information complexity does not grow exponentially with $\varepsilon^{-1}$ and $d$, \ie if
\begin{equation*}
	\lim_{\varepsilon^{-1}+d\nach\infty} \frac{\ln n(\varepsilon,d)}{\varepsilon^{-1}+d}=0.
\end{equation*}


\section{Upper bounds and tractability}\label{sect:avgbounds}
Here we derive conditions on the problem parameters $I_d$ and $r_{\alpha,\bm{\beta}}$ that are sufficient to guarantee (strong) polynomial tractability of the integration problem under consideration.
To this end, we recall an averaging technique that allows to establish upper bounds on the $n$th minimal worst case error \link{def_minerror}. 
Arguments of this type were initially presented in \cite[Lemma 8]{SW98} and further developed by Plaskota, Wasilkowski, and Zhao~\cite{PWZ09}.
For generalizations of the method the interested reader is referred to~\cite[Section 10.7]{NW10}. 


\subsection{An averaging technique}
Given a reproducing kernel $K$ let us define the quantities
\begin{align*}
 		M_{1,d} &= M_{1,d}(K) = \left( \int_{[0,1]^d} \sqrt{K(\bsx,\bsx)} \rd\bsx \right)^2, \\
 		M_{2,d} &= M_{2,d}(K) = \int_{[0,1]^d} K(\bsx,\bsx) \rd\bsx, 
\end{align*}
and
\begin{gather*}
		S_d = S_d(K) = \int_{[0,1]^d}\int_{[0,1]^d} K(\bsx,\bsy) \rd\bsx \rd\bsy.
\end{gather*}
Then $S_d$ coincides with the square of the initial error of numerical integration over $H_d=H(K)$ with respect to the worst case setting.
Furthermore, it can be checked that
\begin{equation*}
	S_d \leq M_{1,d} \leq M_{2,d}.
\end{equation*}
Therefore the integration problem is well-defined for $H_d$ if at least $M_{2,d}(K)$ is finite and it is normalized if $S_d=1$.

The following 
result can be found in \cite[Theorem~1]{PWZ09}.
\begin{proposition}\label{prop:avg_bounds}
		Let $n\in\N$ and assume $M_{1,d}<\infty$ for all $d\in\N$. Then
		\begin{gather*}
				e(n,d;H_d) \leq \sqrt{M_{1,d}-S_d} \; n^{-1/2} = \sqrt{\frac{M_{1,d}}{S_d}-1} \; n^{-1/2} \; e(0,d;H_d).
		\end{gather*}
		and there exist points $\bm{t}^{(0)},\ldots,\bm{t}^{(n-1)}\in[0,1]^d$ 
		such that the cubature rule $Q_{d,n}$ with $w_i=\sqrt{M_{1,d}/K(\bm{t}^{(i)},\bm{t}^{(i)})}$
		achieves this bound.
		Moreover, if $M_{2,d}<\infty$ for $d\in\N$, then there are points such that $\mathrm{QMC}_{d,n}$ (i.e., $w_i \equiv 1$) satisfies
		\begin{gather*}
				e^{\mathrm{wor}}(\mathrm{QMC}_{d,n};H_d) \leq \sqrt{M_{2,d}-S_d} \; n^{-1/2} = \sqrt{\frac{M_{2,d}}{S_d}-1} \; n^{-1/2} \; e(0,d;H_d).
		\end{gather*}
\end{proposition}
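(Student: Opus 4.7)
The plan is to apply an averaging argument: introduce a probability distribution on $[0,1]^d$, draw the sample points i.i.d.\ according to it, and compute the expected value of the squared worst case error via formula~\eqref{eq:e2-K}. If the expectation is bounded by some number $B$, then by a standard mean value argument there must exist a realization of the points whose squared worst case error is at most $B$, which produces a deterministic rule with the desired bound.

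For the first (weighted) bound, I would let $\rho(\bsx) = \sqrt{K(\bsx,\bsx)}/\sqrt{M_{1,d}}$. Since $K$ is a reproducing kernel, $K(\bsx,\bsx) \geq 0$, and the finiteness assumption $M_{1,d}<\infty$ together with the very definition of $M_{1,d}$ guarantees that $\rho$ is a valid probability density on $[0,1]^d$. Now draw $\bm{t}^{(0)},\ldots,\bm{t}^{(n-1)}$ i.i.d.\ from $\rho$ and set $w_i = \sqrt{M_{1,d}/K(\bm{t}^{(i)},\bm{t}^{(i)})}$. Plug this into~\eqref{eq:e2-K} and take the expectation term by term. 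The first term is just $S_d$. For the linear term, for each $j$ the expected value $\mathbb{E}[w_j \int K(\bsx,\bm{t}^{(j)})\,\mathrm{d}\bsx]$ factors so that the weight $w_j$ cancels exactly against $\rho(\bm{t}^{(j)})$, leaving $\int\!\int K(\bsx,\bst)\,\mathrm{d}\bsx\,\mathrm{d}\bst = S_d$; the whole linear term contributes $-2 S_d$. The quadratic term splits into diagonal and off-diagonal contributions. On the diagonal $j=\ell$ the weight cancels $K(\bm{t}^{(j)},\bm{t}^{(j)})$ up to the factor $M_{1,d}$, and using $\int \rho = 1$ gives $M_{1,d}$. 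Off-diagonal, independence plus a repeat of the factorization yields $S_d$. Summing everything gives the expected squared worst case error $(M_{1,d}-S_d)/n$, hence existence of points with $e^{\mathrm{wor}}(Q_{d,n};H_d)^2 \leq (M_{1,d}-S_d)/n$. The equivalent form in terms of $e(0,d;H_d)=\sqrt{S_d}$ follows by dividing out $S_d$ inside the square root.

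For the QMC bound (equal weights, $w_i\equiv 1$) I would repeat the computation with $\rho\equiv 1$ on $[0,1]^d$, which requires the stronger finiteness assumption $M_{2,d}<\infty$ so that the diagonal expectation $\mathbb{E}[K(\bm{t}^{(j)},\bm{t}^{(j)})] = M_{2,d}$ is finite. All other contributions reduce to $S_d$ exactly as before, producing expected squared error $(M_{2,d}-S_d)/n$ and then points achieving this bound.

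I do not expect any serious obstacle here: the calculation is essentially a bookkeeping exercise once the right density and weights are identified. The only subtle point is verifying that the density $\rho$ is well-defined and positive where needed; since $w_j = \sqrt{M_{1,d}/K(\bm{t}^{(j)},\bm{t}^{(j)})}$ is only needed on the support of $\rho$, and there $K(\bm{t}^{(j)},\bm{t}^{(j)})>0$, the weight-density cancellations go through cleanly. Everything else is linearity of expectation and Fubini, both justified by the assumed finiteness of $M_{1,d}$ or $M_{2,d}$.
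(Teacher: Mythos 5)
Your argument is correct, and it reconstructs the standard averaging proof; but note that the paper itself does \emph{not} prove Proposition~\ref{prop:avg_bounds} --- it is stated as a known result and referred to \cite[Theorem~1]{PWZ09}, with the underlying technique attributed to \cite[Lemma~8]{SW98}. Your reconstruction is essentially the argument of those references: sample i.i.d.\ from the importance density $\rho(\bsx)=\sqrt{K(\bsx,\bsx)}/\sqrt{M_{1,d}}$ (well-normalized precisely because $\sqrt{M_{1,d}}=\int\sqrt{K(\bsx,\bsx)}\,\mathrm{d}\bsx$), choose $w_i=1/\rho(\bm{t}^{(i)})$, and compute the expectation of \eqref{eq:e2-K} term by term to get $(M_{1,d}-S_d)/n$, then repeat with $\rho\equiv 1$ and $w_i\equiv 1$ under the stronger assumption $M_{2,d}<\infty$ to get $(M_{2,d}-S_d)/n$. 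The bookkeeping (cancellation of $w_j$ against $\rho(\bm{t}^{(j)})$ in the linear and off-diagonal terms, $M_{1,d}$ resp.\ $M_{2,d}$ from the diagonal) is exactly right, and the mean-value step then gives the existence of a deterministic point set. So: same approach as the cited source, correctly carried out; the only thing to flag is that the paper delegates this to a citation rather than proving it inline.
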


\begin{remark}\label{rem:semiconstr}
Although these bounds are non-constructive it is known that slightly larger bounds
can be achieved with high probability by any random set of points; see \cite[Remark 2]{PWZ09}.
\end{remark}

We want to apply \RefProp{prop:avg_bounds} for the spaces $H_d = \SI_{I_d}(F_d(r_{\alpha,\bm{\beta}}))$ as defined in \autoref{sect:setting}.
In order to conclude (strong) polynomial tractability we simply need to bound $M_{2,d}/S_d$ from above by $C \, d^q$ for some $C,q \geq 0$ and all $d\in\N$ 
(with $q=0$ for strong polynomial tractability).
In the following lemma we calculate the quantities of interest. We postpone its proof to the appendix in \autoref{sect:appendix}.

\begin{lemma}\label{lem:SdMd}
	For $d\in\N$ and every $I_d\subseteq\{1,\ldots,d\}$ it holds
 	\begin{equation}\label{eq:M2Id}
		S_d(K_{d,I_d}) = \beta_0^d
		\qquad \text{and} \qquad
		M_{2,d}(K_{d,I_d})
		= \sum_{\bsk \in \nabla_d} r^{-1}_{\alpha,\bm{\beta}}(\bsk).
	\end{equation}
	If $\#I_d<2$, i.e., $K_{d,I_d}=K_d$, we moreover have, with $\NR{\alpha}$ defined in\RefEq{summableR},
	\begin{equation}\label{eq:M1}
		M_{1,d}(K_d)
		= M_{2,d}(K_d) 
		= \beta_0^d  \left(1 + \frac{2\beta_1 \NR{\alpha}}{\beta_0} \right)^d.
	\end{equation}
\end{lemma}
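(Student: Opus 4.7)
The plan is to expand the integrals of $K_{d,I_d}$ using its Fourier representation \link{sym_kernel_2} and exploit orthogonality of the exponentials on $[0,1]^d$. Throughout we use the convention $P(\bsx)=(x_{P(1)},\ldots,x_{P(d)})$, so that $\bsh\cdot P(\bsx)=P^{-1}(\bsh)\cdot \bsx$, and recall that $r_{\alpha,\bm{\beta}}$ is invariant under any $P\in\S_d$ (the factors with $\ell\notin I_d$ are fixed by $P$, and the remaining ones are merely permuted in the product).

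For $S_d(K_{d,I_d})$, I integrate the right-hand side of \link{sym_kernel_2} over $\bsx$ and $\bsy$ independently. For every fixed $P\in\S_d$ and every $\bsh$, the $\bsy$-integration forces $\bsh=\bszero$, hence also $P^{-1}(\bsh)=\bszero$ and the $\bsx$-integration contributes $1$. Only the term $\bsh=\bszero$ survives; this yields $r^{-1}_{\alpha,\bm{\beta}}(\bszero)=\beta_0^d$, giving the first formula in \link{eq:M2Id}.

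For $M_{2,d}(K_{d,I_d})$, I set $\bsy=\bsx$ in \link{sym_kernel_2}. Then $\bsh\cdot(P(\bsx)-\bsx)=(P^{-1}(\bsh)-\bsh)\cdot\bsx$, and integrating over $\bsx$ yields $1$ exactly when $P^{-1}(\bsh)=\bsh$ and $0$ otherwise. Consequently
\begin{equation*}
M_{2,d}(K_{d,I_d})
=\frac{1}{\#\S_d}\sum_{\bsh\in\Z^d} r^{-1}_{\alpha,\bm{\beta}}(\bsh)\,\#\{P\in\S_d\sep P(\bsh)=\bsh\}.
\end{equation*}
The summand is invariant under $\bsh\mapsto P'(\bsh)$ (both $r_{\alpha,\bm{\beta}}$ and the stabilizer cardinality are), so the symmetrization identity~\link{eq:transform} turns the sum over $\Z^d$ into a sum over $\nabla_d$ in which each inner summand equals the original. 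Since by definition $\#\{P\in\S_d\sep P(\bsk)=\bsk\}=\M_d(\bsk)!$ for $\bsk\in\nabla_d$, the factors $\#\S_d$ and $\M_d(\bsk)!$ cancel cleanly and the second identity in \link{eq:M2Id} drops out.

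Finally, for $\#I_d<2$ the set $\S_d$ is trivial, so $K_{d,I_d}=K_d$ and $\nabla_d=\Z^d$. From \link{def_kernel} one sees directly that $K_d(\bsx,\bsx)=\sum_{\bsh\in\Z^d} r^{-1}_{\alpha,\bm{\beta}}(\bsh)$, which is constant in $\bsx$; this makes $M_{1,d}(K_d)=M_{2,d}(K_d)$ trivially and reduces the claim to evaluating the series. Here I use the product structure of $r_{\alpha,\bm{\beta}}$ to factor the sum coordinatewise, so that each factor equals $\beta_0+2\beta_1\sum_{m\geq 1}R(m)^{-2\alpha}=\beta_0+2\beta_1\NR{\alpha}$ by \link{summableR}, giving \link{eq:M1}. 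The main conceptual hurdle is the bookkeeping in the $M_{2,d}$ step—matching the stabilizer counts with the factor $1/\M_d(\bsk)!$ provided by \link{eq:transform}—but once the $P$-action on Fourier indices and the symmetry of $r_{\alpha,\bm{\beta}}$ are made explicit, the cancellation is automatic; the remaining computations are routine Fourier orthogonality and tensorization.
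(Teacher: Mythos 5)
Your proof is correct, and it takes a genuinely different route from the paper's. The paper first invokes the tensor-product factorization $X_d(K_{d,I_d}) = X_{\#I_d}(K_{\#I_d,\{1,\ldots,\#I_d\}})\, X_{d-\#I_d}(K_{d-\#I_d})$ to reduce to the two extremal cases (full permutation-invariance and no permutation-invariance), computes $M_{2,d}$ for the fully invariant case by working directly with the representation of the kernel as a sum over $\nabla_d$, and then patches the results together. You instead work with the $\Z^d$-representation \link{sym_kernel_2}, which is already stated for arbitrary $I_d$: orthogonality on the diagonal gives
\begin{equation*}
M_{2,d}(K_{d,I_d})=\frac{1}{\#\S_d}\sum_{\bsh\in\Z^d} r^{-1}_{\alpha,\bm{\beta}}(\bsh)\,\M_d(\bsh)!,
\end{equation*}
and then the transform identity \link{eq:transform}, combined with the $\S_d$-invariance of both $r_{\alpha,\bm{\beta}}$ and the stabilizer cardinality (which you correctly note as the key bookkeeping fact), collapses this to $\sum_{\bsk\in\nabla_d}r^{-1}_{\alpha,\bm{\beta}}(\bsk)$ directly for all $I_d$. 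What this buys you: you bypass the tensor-factorization step entirely, so you never need the separate reduction to extremal cases. What the paper's approach buys: it makes the $\M_d(\bsk)!$ cancellation visible term-by-term without needing to observe the $\S_d$-invariance of the summand explicitly, since each term in $\nabla_d$ is counted once with its correct weight from the outset. Your handling of the $S_d$ computation (forcing $\bsh=\bszero$ by $\bsy$-orthogonality) and of the $\#I_d<2$ case (triviality of $\S_d$, constancy of $K_d(\bsx,\bsx)$, tensorization of the resulting series) matches the paper in substance.
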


\begin{remark}
We stress the point that, since $\nabla_d \subsetneq \Z^d$ whenever $\#I_d \geq 2$, the term $M_{2,d}(K_{d,I_d})$ given in \link{eq:M2Id} might be dramatically smaller than the respective quantity \link{eq:M1} for the full space.
\end{remark}

In order to derive a suitable upper bound for $M_{2,d}(K_{d,I_d})/S_d$ it suffices to 
consider the fully permutation-invariant part. That is, we assume $K=K_{d,\{1,\ldots,d\}}$ in what follows.
Denoting the number of non-zero components $h_j$ of $\bsh\in\Z^d$ by $|\bsh|_0$
we can estimate the sum in \link{eq:M2Id} as follows:
\begin{align*}
	M_{2,d}(K_{d,\{1,\ldots,d\}}) 
    &= \frac{1}{\#\S_d} \sum_{\bsh \in \Z^d} \M_d(\bsh)! \, r^{-1}_{\alpha,\bm{\beta}}(\bsh) \\
    &\leq \frac{1}{\#\S_d} \sum_{\bsk \in \N_0^d} 2^{|\bsk|_0} \M_d(\bsk)! \, r^{-1}_{\alpha,\bm{\beta}}(\bsk)
    = \sum_{\substack{\bsk\in\N_0^d\\0\leq k_{1}\leq\cdots\leq k_{d}}} 2^{|\bsk|_0} \, r^{-1}_{\alpha,\bm{\beta}}(\bsk).
\end{align*}
The latter sum can be bounded with the help of another, rather technical lemma which is based on \cite[Lemma~4]{W12}. For the convenience of the reader a detailed proof can be found in the appendix (\RefSec{sect:appendix}).
\begin{lemma}\label{lemmaBound}
				Assume $(\lambda_m)_{m\in\N_0}$ to be a sequence of 
				non-negative real numbers with $\lambda_0 > 0$ and $\lambda_0 \geq \lambda_m \ge 0$ for all $m\in\N_0$.
				Moreover, set $\lambda_{s,\bm{k}}=\prod_{\ell=1}^s \lambda_{k_\ell}$ for $\bm{k}\in\N_0^s$ and $s\in\N$.
				Then we have for all $V\in\N_0$ and every $d\in\N$
				\begin{gather}\label{estimate_VNEW}
								\sum_{\substack{\bm{k}\in\N_0^d\\0\leq k_1\leq\cdots\leq k_d}} \lambda_{d,\bm{k}}
								\leq \lambda_0^d \, d^V \left( 1 + V + \sum_{L=1}^d \lambda_0^{-L} 
										 \sum_{ \substack{ \bm{j}\in\N^L\\V+1\leq j_1\leq\cdots\leq j_L } } \lambda_{L,\bm{j}} \right)
				\end{gather}
				with equality at least for $V=0$.
\end{lemma}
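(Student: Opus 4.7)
The plan is to split the sum according to the number $L\in\{0,1,\ldots,d\}$ of entries of $\bm{k}$ that exceed $V$. Because the sequence is weakly increasing, such entries must occupy the last $L$ positions, so the sum factorizes as
\begin{equation*}
    \sum_{\substack{\bm{k}\in\N_0^d\\0\leq k_1\leq\cdots\leq k_d}} \lambda_{d,\bm{k}}
    = \sum_{L=0}^{d} \left(\sum_{0 \le k_1 \le \cdots \le k_{d-L} \le V} \lambda_{d-L,\bm{k}}\right) A_L,
\end{equation*}
where $A_L = \sum_{V+1 \le j_1 \le \cdots \le j_L} \lambda_{L,\bm{j}}$ is precisely the inner sum on the right-hand side of~\eqref{estimate_VNEW} (with $A_0 = 1$ by the empty-product convention). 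This reduces the task to estimating the ``low'' factor uniformly in $L$.

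Using the hypothesis $\lambda_m \le \lambda_0$, the low factor is at most $\lambda_0^{d-L}$ times the number of weakly increasing sequences of length $d-L$ drawn from $\{0,1,\ldots,V\}$, which by a standard stars-and-bars count equals $\binom{V+d-L}{V}$. The core combinatorial step is then the pair of estimates
\begin{equation*}
    \binom{V+d-L}{V} \le d^V \quad (L \ge 1), \qquad \binom{V+d}{V} \le (V+1)\, d^V.
\end{equation*}
The first holds because $\binom{V+d-L}{V}$ counts weakly increasing length-$V$ sequences in a set of cardinality $d-L+1 \le d$, and is therefore dominated by the total count $(d-L+1)^V \le d^V$ of all length-$V$ sequences in that set. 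For the second I plan to invoke the hockey-stick identity $\binom{V+d}{V} = \sum_{j=0}^{V} \binom{d-1+j}{j}$ and apply the same counting argument to each summand, bounding $\binom{d-1+j}{j} \le d^j \le d^V$ and summing the $V+1$ resulting terms.

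Inserting these estimates back into the factorization and separating the $L=0$ contribution from the rest gives
\begin{equation*}
    \sum_{\substack{\bm{k}\in\N_0^d\\0\leq k_1\leq\cdots\leq k_d}} \lambda_{d,\bm{k}}
    \le \lambda_0^d (V+1)\, d^V + \sum_{L=1}^d \lambda_0^{d-L}\, d^V \, A_L
    = \lambda_0^d\, d^V \left(1 + V + \sum_{L=1}^d \lambda_0^{-L}\, A_L\right),
\end{equation*}
which is exactly~\eqref{estimate_VNEW}. The equality assertion for $V=0$ then drops out: there $d^V=1$ and each combinatorial bound $\binom{d-L}{0}=1$ is tight, so every inequality above collapses to an identity and one recovers the straightforward decomposition of the original sum by the number of nonzero entries. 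I expect the only genuinely subtle step to be the refinement $\binom{V+d}{V} \le (V+1)\,d^V$, which is strictly stronger than the naive bound $(d+1)^V$ and relies essentially on the hockey-stick identity.
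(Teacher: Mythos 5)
Your proof is correct, and it takes a genuinely different route from the paper. The paper proves the lemma by induction on $V$: the base case $V=0$ is the exact decomposition (your identity, specialized to threshold $0$), and the inductive step reapplies the same decomposition with threshold $m=V+1$ to the ``tail'' sums $\sum_{V+1\le j_1\le\cdots\le j_L}\lambda_{L,\bm j}$, then absorbs the resulting geometric-type sums into a single extra factor of $d$ using $\lambda_{V+1}/\lambda_0\le 1$ and $1+V\le d(1+V)$. You instead apply the decomposition once with the final threshold $V$, recognize the low-part count as the multiset number $\binom{V+d-L}{V}$, and close the argument with the two purely combinatorial bounds $\binom{V+d-L}{V}\le d^V$ for $L\ge1$ and $\binom{V+d}{V}\le(V+1)d^V$ via hockey-stick. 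Both proofs hinge on the same structural observation (split by how many coordinates exceed the threshold); the paper's induction trades the need for any binomial estimate for an iterated application of the identity, while your version is a single-pass argument whose only nontrivial ingredient is the refinement $\binom{V+d}{V}\le(V+1)d^V$. One small polish: the first bound $\binom{V+d-L}{V}\le(d-L+1)^V\le d^V$ already covers $L=0$ if one replaces $d$ by $d+1$, so the hockey-stick step is genuinely needed only to keep the constant at $V+1$ rather than inflating $d^V$ to $(d+1)^V$; it is worth stating that this is the reason for treating $L=0$ separately. The equality claim for $V=0$ is also cleanly handled by your argument, since then $d^V=1$, every binomial coefficient in sight equals $1$, and the bound $\lambda_{k_i}\le\lambda_0$ is used only for $k_i=0$, where it is an identity.
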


\noindent Setting $(\lambda_m)_{m\in\N_0}$ to
\begin{gather}\label{mu}
		\lambda_0 = \beta_0>0 \quad\text{and}\quad \lambda_m = 2 \, \beta_1 \, R(m)^{-2\alpha}, \quad m\in\N,
\end{gather}
we observe that $\lambda_{d,\bm{k}} = 2^{\abs{\bm{k}}_0} \, r^{-1}_{\alpha,\bm{\beta}}(\bsk)$, $\bm{k}\in\N_0^d$. 
Thus, we can apply \RefLem{lemmaBound} if 
\begin{gather}\label{assump}
		\frac{2 \, \beta_1}{\beta_0 \, R(m)^{2\alpha}} \leq 1
		\quad \text{for all} \quad m\in\N.
\end{gather} 
In Equation\RefEq{eq:repres} from \RefProp{prop:constant} we will see that a condition like \link{assump} is indeed necessary in order to avoid an exponential dependence of the term $M_{2,d}(K_{d,\{1,\ldots,d\}}) /S_d$ on the dimension~$d$.
From~\link{summableR} we particularly conclude that there
exists some $V^*=V^*(R,\alpha,\bm{\beta})\in\N_0$ such that
\begin{gather}\label{boundN}
		\NN^* = \NN^*(V^*)= \sum_{m=V^*+1}^\infty \frac{2 \, \beta_1}{\beta_0 \, R(m)^{2\alpha}} < 1.
\end{gather}
Using \RefLem{lemmaBound} for this $V^*$ and $\lambda$ given by \link{mu}
we obtain
\begin{align*}
		M_{2,d}(K_{d,\{1,\ldots,d\}}) 
		&\leq \beta_0^d \, d^{V^*} \left( 1+ V^* + \sum_{L=1}^d \sum_{ \substack{ \bm{j}\in\N_0^L \\V^*+1\leq j_1\leq\cdots\leq j_L } }  
		  \prod_{\ell=1}^L \frac{2 \, \beta_1}{\beta_0 \, R(j_\ell)^{2\alpha}} \right) \\
		&\leq \beta_0^d \, d^{V^*} \left( 1+ V^* + \sum_{L=1}^\infty (\NN^*)^L  \right) \\
		&= S_d \, d^{V^*} \left( V^* + \frac{1}{1-\NN^*} \right).
\end{align*}
In summary we obtain the bound
\begin{align*}
	\frac{M_{2,d}(K_{d,I_d})}{S_d}
	&= \frac{M_{2,d-\# I_d}(K_{d-\# I_d})}{S_{d-\# I_d}} \; \frac{ M_{2,\# I_d}(K_{\# I_d, \{1,\ldots,\#I_d\}})}{S_{\# I_d}} \\
	&\leq \left(1 + \frac{2\beta_1 \NR{\alpha}}{\beta_0} \right)^{d-\# I_d} \; (\# I_d)^{V^*} \left( V^* + \frac{1}{1-\NN^*} \right)
\end{align*}
which, in view of \RefProp{prop:avg_bounds}, implies the following theorem that ensures the existence of good QMC algorithms for the approximation of the integrals \link{Int}.

\begin{theorem}\label{thm:tractability}
		For $d\geq 2$ let $I_d\subseteq\{1,\ldots,d\}$ with $\# I_d \geq 2$ and assume \link{assump} to be true.
		We consider the integration problem on the $I_d$-permutation-invariant subspaces $\SI_{I_d}(F_d(r_{\alpha,\bm{\beta}}))$.
		Then
		\begin{itemize}
			\item[$\bullet$] for all $n$ and $d\in\N$ the $n$th minimal worst case error is bounded by
		\begin{align}
			e(n,d; \SI_{I_d}(F_d(r_{\alpha,\bm{\beta}}))) 
			&
			\leq e(0,d; \SI_{I_d}(F_d(r_{\alpha,\bm{\beta}}))) \, \sqrt{V^* + \frac{1}{1-\NN^*}}  \nonumber \\
				&\qquad\qquad \times \left(1 + \frac{2 \beta_1 \NR{\alpha}}{\beta_0} \right)^{(d-\#I_d)/2} (\#I_d)^{V^*/2} \frac{1}{\sqrt{n}}, \label{errorBound}
		\end{align}
		where the absolute constants $V^*$ and $\NN^*$ are given by \link{boundN}.
			\item[$\bullet$] there exists a QMC rule which achieves this bound.
		\end{itemize}
		Consequently, we have the following tractability statements:
		\begin{itemize}
			\item[$\bullet$] If $(d-\#I_d) \in \0(\ln d)$ then the integration problem is polynomially tractable (with respect to the worst case setting and the normalized error criterion).
			\item[$\bullet$] If $(d-\#I_d) \in \0(1)$ and \link{boundN} holds for $V^*=0$ then we obtain strong polynomial tractability.
		\end{itemize}		
\end{theorem}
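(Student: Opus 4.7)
The plan is to combine the averaging bound of \RefProp{prop:avg_bounds} with the tensor factorization \link{tensor_kernel} of the reproducing kernel, the values recorded in \RefLem{lem:SdMd}, and the estimate obtained just before the theorem via \RefLem{lemmaBound}; every ingredient is already in place, so the proof is essentially bookkeeping. Concretely, \RefProp{prop:avg_bounds} applied to $H_d = \SI_{I_d}(F_d(r_{\alpha,\bm{\beta}}))$ (whose reproducing kernel is $K_{d,I_d}$) supplies points $\bm{t}^{(0)},\ldots,\bm{t}^{(n-1)}$ for which the equal-weight rule $\mathrm{QMC}_{d,n}$ satisfies $e^{\mathrm{wor}}(\mathrm{QMC}_{d,n}; H_d) \leq \sqrt{M_{2,d}(K_{d,I_d})/S_d(K_{d,I_d}) - 1}\; n^{-1/2}\, e(0,d;H_d)$. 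Both bullets of the first half of the theorem will therefore follow once $M_{2,d}(K_{d,I_d})/S_d(K_{d,I_d})$ is bounded appropriately, because the minimal worst case error is automatically dominated by this QMC rule.

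Next, by the tensor identity \link{tensor_kernel} both $S_d(K_{d,I_d})$ and $M_{2,d}(K_{d,I_d})$ split multiplicatively across the $(d-\#I_d)$-variate full space and the $\#I_d$-variate fully permutation-invariant space. The first factor equals $\bigl(1 + 2\beta_1 \NR{\alpha}/\beta_0\bigr)^{d-\#I_d}$ by \link{eq:M1}. For the second factor I would invoke the estimate proved immediately before the theorem statement: under hypothesis \link{assump} the sequence $\lambda$ from \link{mu} satisfies $\lambda_0 = \beta_0 \geq \lambda_m \geq 0$ for all $m$, so \RefLem{lemmaBound} applies with $V = V^*$ chosen according to \link{boundN}, yielding $M_{2,\#I_d}(K_{\#I_d,\{1,\ldots,\#I_d\}})/S_{\#I_d} \leq (\#I_d)^{V^*}\bigl(V^* + (1-\NN^*)^{-1}\bigr)$. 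Multiplying the two factors and substituting into the averaging bound gives \link{errorBound}.

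Finally, the tractability claims come from recasting \link{errorBound} in terms of information complexity: dividing by the initial error and solving for $n$ at level $\varepsilon$ produces
\begin{equation*}
n(\varepsilon,d) \leq \left[V^* + \frac{1}{1-\NN^*}\right] (\#I_d)^{V^*} \left(1 + \frac{2\beta_1 \NR{\alpha}}{\beta_0}\right)^{d-\#I_d} \varepsilon^{-2}.
\end{equation*}
If $d-\#I_d \in \0(\ln d)$, then $a^{d-\#I_d}$ with $a = 1 + 2\beta_1 \NR{\alpha}/\beta_0$ is bounded by a polynomial in $d$ via $a^{C\ln d} = d^{C\ln a}$, and $(\#I_d)^{V^*} \leq d^{V^*}$, so the complexity bound fits \link{trac_defn}. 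If $d-\#I_d \in \0(1)$ and \link{boundN} holds with $V^* = 0$, both dimension-dependent factors collapse to absolute constants, yielding strong polynomial tractability. There is no substantive obstacle; the only point deserving attention is that hypothesis \link{assump} is exactly what is needed to invoke \RefLem{lemmaBound}, which is immediate from the definition of $\lambda$ in \link{mu}.
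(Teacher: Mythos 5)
Your proposal is correct and follows the paper's own argument step for step: apply the QMC version of \RefProp{prop:avg_bounds}, split $M_{2,d}(K_{d,I_d})/S_d$ via the tensor factorization \link{tensor_kernel} and \RefLem{lem:SdMd}, bound the full-space factor by \link{eq:M1} and the permutation-invariant factor via \RefLem{lemmaBound} with the choices \link{mu} and \link{boundN}, then read off tractability by solving \link{errorBound} for $n$. The paper's ``proof'' is precisely the derivation preceding the theorem, and your reconstruction matches it.
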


\subsection{Discussion}
Let us illustrate the obtained results with some examples.
We first consider the case where $I_d=\{1,\ldots,d\}$ and $\beta_0=1$, \ie fully permutation-invariant subspaces
where the integration problem is well-scaled.
In this case the bound~\link{errorBound} simplifies to
\begin{gather*}
		e(n,d) 
				\leq \sqrt{V^* + \frac{1}{1-\NN^*}} \; d^{V^*/2} \; \frac{1}{\sqrt{n}}.
\end{gather*}

Then for the classical unweighted Korobov space ($\beta_0=\beta_1=1$ and $R(m)=m$, see \RefEx{ex:spaces}(i)) our assumption~\link{assump} is not
fulfilled.
We can overcome this problem by changing the parameter $\beta_1$ to $1/2$.
In this case $\NN^*$ equals the generalized zeta function $\zeta(2\alpha,V^*+1)$ 
which can only be smaller than one for $V^*>0$, depending on $\alpha$.
Hence, we can show polynomial tractability, but not strong polynomial tractability for the Korobov space.

For the periodic unanchored Sobolev space from \RefEx{ex:spaces}(ii) with
$\beta_0=\beta_1=1$ and $R(m)=2\pi m$ our assumption~\link{assump} is always fulfilled for $\alpha > 1/2$ and we can prove strong polynomial tractability if $\alpha$ is sufficiently large.
(We here consider arbitrary $\alpha$ as this is then a modified Korobov space with appropriately chosen $\beta_1$.)
Indeed, $\NN^*(0)<1$ if $\alpha \geq \alpha^*\approx 0.61769976$.
Unfortunately, the constant $(1-\NN^*)^{-1/2}$ 
will be extremely large for smoothness parameters 
$\alpha$ close to $\alpha^*$.
On the other hand, already $\alpha=1$ yields
\begin{gather*}
		\sqrt{\frac{1}{1-\NN^*}} = \sqrt{12/11} \approx 1.044465936.
\end{gather*}

For the periodic Sobolev space of dominating mixed smoothness discussed in \RefEx{ex:spaces}(iii), with 
$\beta_0=\beta_1=1$ and $R(m) = (1+m^2)^{1/2}$, it follows immediately that our assumption~\link{assump} is fulfilled for $\alpha \ge 1$.
For $\alpha=1$ and $V^*=0$ we have $\eta^*(0) \approx 2.15335$, but for $\alpha \ge \alpha^* \approx 1.521196$ we find $\eta^*(0) < 1$ and thus strong polynomial tractability from there on.
For $\alpha=2$ we thus have strong polynomial tractability with $\eta^*(0) \approx 0.613674$ and a very acceptable constant of $(1-\NN^*)^{-1/2} \approx 1.60888$.
Although we only replace $\max\{1,|k|\}$ by $(1+k^2)^{1/2}$ for $k \in \Z$ when moving from the Korobov norm to the norm of the Sobolev space of dominating mixed smoothness, these observations particularly show that exchanging equivalent norms can cause a big difference in tractability.

We contrast these results with results known for the full space, i.e., for $I_d=\emptyset$.
In the literature the following assertions for the full space with
\begin{equation*}
	\alpha>1/2, 
	\quad 
	\beta_0=1, 
	\quad 
	0 < \beta_1=\beta_1(d) \leq C < \infty, 
	\quad \text{and} \quad 
	R(m)=2\pi m \, \text{ for } \, m\in\N,
\end{equation*}
can be found; see \cite[Theorems~16.5 and~16.16]{NW10}.
\begin{proposition}\label{prop:fullspace}
	Integration on $F_d(r_{\alpha,\bm{\beta}})$ is
	\begin{itemize}
		\item[$\bullet$] weakly tractable if and only if 
			\begin{equation*}
				\lim_{d\nach\infty} \beta_1(d)=0.
			\end{equation*}
		\item[$\bullet$] polynomially tractable if and only if 
			\begin{equation*}
				\beta_1(d) \leq C\, \frac{\ln(d+1)}{d}
				\qquad \text{for some} \quad C<\infty \quad \text{and all} \quad d\in\N.
			\end{equation*}
		\item[$\bullet$] strongly polynomially tractable if and only if
			\begin{equation*}
				\beta_1(d) \leq C\, \frac{1}{d}
				\qquad \text{for some} \quad C<\infty \quad \text{and all} \quad d\in\N.
			\end{equation*}
	\end{itemize}
\end{proposition}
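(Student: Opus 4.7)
The plan is to prove each of the three equivalences by establishing the sufficient (``if'') direction through the averaging technique of \RefProp{prop:avg_bounds}, and the necessary (``only if'') direction through a lower bound that exploits the tensor product structure of $F_d(r_{\alpha,\bm{\beta}})$.

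For the upper bounds I would specialize \RefLem{lem:SdMd} to the parameters at hand. Since $\alpha>1/2$, $\beta_0=1$, and $R(m)=2\pi m$, the constant $\NR{\alpha}=(2\pi)^{-2\alpha}\zeta(2\alpha)$ is finite, so formula \link{eq:M1} yields $S_d=1$ and $M_{2,d}(K_d)=(1+2\beta_1(d)\NR{\alpha})^d$. Inserting this into \RefProp{prop:avg_bounds} gives
\begin{equation*}
  e(n,d; F_d(r_{\alpha,\bm{\beta}}))^2 \leq \frac{(1+2\beta_1(d)\NR{\alpha})^d - 1}{n}.
\end{equation*}
Elementary estimates of the numerator then cover the three sufficient parts: if $\beta_1(d)\leq C/d$, it stays bounded by $\exp(2C\NR{\alpha})-1$ uniformly in $d$, yielding strong polynomial tractability with $n(\varepsilon,d)\leq\lceil(\exp(2C\NR{\alpha})-1)\varepsilon^{-2}\rceil$; if $\beta_1(d)\leq C\ln(d+1)/d$, it is bounded by $(d+1)^{2C\NR{\alpha}}-1$, giving polynomial tractability; and if only $\beta_1(d)\to 0$, then $d\ln(1+2\beta_1(d)\NR{\alpha})=o(d)$, so $\ln n(\varepsilon,d)=o(d)+2\ln\varepsilon^{-1}$, which verifies weak tractability.

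For the necessary directions I would exploit the fact that $F_d(r_{\alpha,\bm{\beta}})$ is the $d$-fold tensor product of a univariate RKHS whose embedding into $L_2$ has singular values $1$ (for the constant) and $\sqrt{\beta_1(d)}\,R(|h|)^{-\alpha}$ for $h\in\Z\setminus\{0\}$. Consequently, the singular values of the $d$-variate embedding are products of one-dimensional ones, and a standard fooling-function argument (choosing a unit-norm element of $F_d$ that is $L_2$-orthogonal to the finite-dimensional space spanned by the point evaluations) yields a lower bound for $e(n,d; F_d)^2$ in terms of residual sums of these squared singular values. Counting how many products $\prod_{\ell\in A}\sqrt{\beta_1(d)}R(1)^{-\alpha}$, indexed by $A\subseteq\{1,\ldots,d\}$, remain above a prescribed threshold then translates each hypothesis on the decay of $\beta_1(d)$ into an obstruction to the corresponding flavor of tractability: $d\beta_1(d)\not\in\0(1)$ blocks strong polynomial tractability, $d\beta_1(d)/\ln(d+1)\not\in\0(1)$ blocks polynomial tractability, and $\beta_1(d)\not\to 0$ blocks weak tractability.

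The main obstacle is making the sharp ``only if'' direction for strong polynomial tractability rigorous, because it requires choosing the cardinality $s\sim c\log d$ of the subset $A$ and balancing the combinatorial factor $\binom{d}{s}$ against $(\beta_1(d)R(1)^{-2\alpha})^s$ to show that sufficiently many eigendirections stay uniformly large whenever $d\beta_1(d)\to\infty$. Once this sharp counting estimate is in place, the polynomial and weak tractability counterparts follow by milder analogues of the same argument, and the proposition assembles by combining the three pairs of matching bounds.
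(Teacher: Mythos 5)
The paper's own proof is a one-line citation: it invokes \cite[Theorems~16.5 and 16.16]{NW10} for the weighted Korobov setting and simply identifies the product weights $\gamma_{d,j}$ there with the constant value $\beta_1(d)$. Your proposal instead tries to reprove the statement from scratch, which is a genuinely different route.

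Your ``if'' (sufficiency) direction is correct and, unsurprisingly, duplicates what the paper already does in \autoref{sect:avgbounds} for the permutation-invariant case: with $\beta_0=1$ you get $S_d=1$ and $M_{2,d}(K_d)=(1+2\beta_1(d)\NR{\alpha})^d$ from \RefLem{lem:SdMd}, and the averaging bound of \RefProp{prop:avg_bounds} together with the elementary estimate $(1+x)^d\le\exp(dx)$ delivers all three upper bounds cleanly. That part would stand.

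The ``only if'' (necessity) direction has a real gap — not merely in the counting details you flag, but in the basic mechanism. You propose to lower-bound $e(n,d;F_d)$ by a residual sum of squared singular values of the $L_2$-embedding of the tensor product RKHS. That argument is appropriate for $L_2$-\emph{approximation}, where the minimal error with arbitrary linear information is the $(n+1)$st singular value. For \emph{integration} it breaks down: the Riesz representer of $\mathrm{Int}_d$ in $F_d(r_{\alpha,\bm{\beta}})$ is the constant function, i.e., $\mathrm{Int}_d$ is concentrated on a single eigendirection of the embedding. Consequently there is no ``residual sum of singular values'' supporting the functional, and a fooling function that is $L_2$-orthogonal to the span of the point evaluations gives no control over $\mathrm{Int}_d f$ whatsoever. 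The correct lower-bound technique for integration in weighted Korobov spaces is the one in \cite[Chapter~16]{NW10}: one bounds the distance from the representer of $\mathrm{Int}_d$ to the span of the kernel sections $K_d(\cdot,\bst^{(j)})$, using decomposition and exponential-sum lemmas specific to that chapter, and this is a substantial argument rather than a routine counting step. In its current form your necessity direction cannot be patched by ``milder analogues'' — it needs a different starting point. The safest route here is the one the paper actually takes: cite the established result and check the weight correspondence.
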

\begin{proof}
The authors of \cite{NW10} deal with coordinate dependent bounded product weights $\gamma_{d,j}$. Setting $\gamma_{d,j} = \beta_1(d)$ for all $j=1,\ldots,d$ and every $d\in\N$ proves the claim.
\end{proof}

Thus if $\alpha$ is large enough, then we have strong polynomial tractability for the fully permutation-invariant problem, whereas the integration problem on the full space is not even weakly tractable.

\begin{remark}\label{rem:decaying_weights}
Let us stress the point that there is a trade-off between our growth conditions on the subsets $I_d$ and the decay conditions on the weight parameters $\beta_1$ which are typically imposed to achieve tractability. To give an example, we see that the factor
\begin{align*}
	\left(1 + \frac{2 \beta_1 \NR{\alpha}}{\beta_0} \right)^{(d-\#I_d)/2} 
	\leq \exp\!\left( \frac{\NR{\alpha}}{\beta_0} \; \beta_1 \; (d-\#I_d) \right)
\end{align*}
in \link{errorBound} is upper bounded polynomially in $d$ if
\begin{equation*}
	\beta_1
	=\beta_1(d) 
	\leq C \,\frac{\ln (d+1)}{\max\{d-\#I_d,1\}}
\end{equation*}
for some $C>0$ and all $d$. If $\# I_d$ is uniformly bounded, then this condition coincides with the well-known assumption stated in \RefProp{prop:fullspace}. Moreover, in this case \link{assump} is always fulfilled (at least for $d\geq d_0$).
In contrast, allowing a growth of $I_d$ with the dimension leads us to weaker restrictions on $\beta_1$ such that finally constant $\beta_1$ is sufficient for (strong) polynomial tractability provided that \link{assump} is fulfilled.
\end{remark}


\section{Rank-1 lattice rules}
\label{sect:lattice_rules}
This section contains our main results. Here we analyze unshifted and shifted rank-$1$ lattice rules for the approximation of the integral \link{Int} of $I_d$-permutation-invariant functions from the Korobov-type spaces $F_d(r_{\alpha,\bm{\beta}})$ defined in \autoref{sect:setting}.
First of all we give an exact error formula which holds for \emph{general} cubature rules $Q_{d,n}$ of the form \link{QMC}. The proof can be found in the appendix (\autoref{sect:appendix}).

\begin{lemma}\label{lem:e2}
For $d,n\in\N$ let $Q_{d,n}$ denote a general cubature rule given by \link{QMC}.
Then its worst case error on the $I_d$-permutation-invariant subspace of $F_d(r_{\alpha,\bm{\beta}})$ satisfies
\begin{align*}
		& e^{\mathrm{wor}}(Q_{d,n}; \SI_{I_d}(F_d(r_{\alpha,\bm{\beta}})))^2 \\
		&\qquad\qquad
		= r^{-1}_{\alpha,\bm{\beta}}(\bm 0) \left( 1 - \frac2n \sum_{j=0}^{n-1} w_j \right) 
		+ 	\sum_{\bsh \in \Z^d} r^{-1}_{\alpha,\bm{\beta}}(\bsh) \left( \frac1{n} \sum_{\ell=0}^{n-1} w_\ell\,  \exp(-\twopii \bsh\cdot \bst^{(\ell)})\right) \\
		&  \qquad\qquad\qquad \qquad \qquad \qquad \qquad \qquad \qquad \qquad \times
 \left( \frac{1}{\#\S_d} \sum_{P \in \S_d} \frac1{n} \sum_{j=0}^{n-1} w_j \,\exp(\twopii P(\bsh) \cdot \bst^{(j)}) \right). 
	\end{align*}
\end{lemma}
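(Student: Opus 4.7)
The plan is to specialise the general worst case error formula~\eqref{eq:e2-K} to the reproducing kernel $K_{d,I_d}$ expanded in the Fourier-type form~\eqref{sym_kernel_2}, evaluate each of the three resulting terms, and recombine them. Only two elementary ingredients are needed: the orthogonality relation $\int_{[0,1]^d}\exp(\twopii \bsh\cdot\bsx)\rd\bsx=\delta_{\bsh,\bszero}$ for every $\bsh\in\Z^d$, and the fact that coordinate permutations are measure-preserving bijections of $[0,1]^d$ (equivalently, $\bsh\cdot P(\bsx)=P^{-1}(\bsh)\cdot\bsx$ for all $P\in\S_d$). Absolute convergence of the Fourier expansion of $K_{d,I_d}$, guaranteed by~\eqref{summableR}, will legitimise all interchanges of summation and integration.

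For the first two summands in~\eqref{eq:e2-K} I substitute~\eqref{sym_kernel_2} and integrate term by term. In both expressions only the $\bsh=\bszero$ summand survives the integration, yielding $\int_{[0,1]^{2d}} K_{d,I_d}(\bsx,\bsy)\rd\bsx\rd\bsy = r^{-1}_{\alpha,\bm{\beta}}(\bszero)$ together with $\int_{[0,1]^d} K_{d,I_d}(\bsx,\bst^{(j)})\rd\bsx = r^{-1}_{\alpha,\bm{\beta}}(\bszero)$ for every $j=0,\ldots,n-1$. Combining these with the prefactors from~\eqref{eq:e2-K} gives the contribution $r^{-1}_{\alpha,\bm{\beta}}(\bszero)\bigl(1-\frac{2}{n}\sum_{j=0}^{n-1}w_j\bigr)$ of the claimed formula.

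For the quadratic term I substitute~\eqref{sym_kernel_2} once more, factor $\exp(\twopii\bsh\cdot(P(\bst^{(j)})-\bst^{(\ell)}))$ as $\exp(\twopii\bsh\cdot P(\bst^{(j)}))\exp(-\twopii\bsh\cdot\bst^{(\ell)})$, and reorder the finite sums over $j$, $\ell$ and $P$ with the absolutely convergent sum over $\bsh$. This decouples the expression into a product of a $P$-independent $\ell$-sum and a $P$-averaged $j$-sum, matching precisely the two bracketed factors in the statement. The exponent $P(\bsh)\cdot\bst^{(j)}$ appearing in the lemma arises by rewriting $\bsh\cdot P(\bst^{(j)})$ as $P^{-1}(\bsh)\cdot\bst^{(j)}$ and then reindexing $P\mapsto P^{-1}$ in the group average over $\S_d$.

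Summing the three contributions immediately yields the identity; note that no separate treatment of the $\bsh=\bszero$ summand in the series is required, since it is already incorporated. The argument is essentially careful bookkeeping, and the only point warranting a moment's attention is the interchange of summation orders in the quadratic term, which is justified by the absolute convergence of the Fourier series. No serious obstacle arises.
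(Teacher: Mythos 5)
Your proposal is correct and follows essentially the same route as the paper's own proof: substitute the Fourier expansion~\eqref{sym_kernel_2} of $K_{d,I_d}$ into the general worst case error formula~\eqref{eq:e2-K}, use orthogonality to reduce the first two terms to $r^{-1}_{\alpha,\bm{\beta}}(\bszero)\bigl(1-\tfrac{2}{n}\sum_j w_j\bigr)$, factor and decouple the double cubature sum, and finish with the reindexing $P\mapsto P^{-1}$. The only interchange that needs justification — swapping the $\bsh$-sum with the finite sums and the integral — is covered by the absolute convergence you invoke, so the argument is complete.
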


\begin{remark} 
Note that, as for the standard space, the first part of the squared worst case error only depends on $r^{-1}_{\alpha,\bm{\beta}}(\bm 0)$ and $w_j$. Thus, it cannot be reduced by permutation-invariance encoded by~$I_d$.
Moreover, for QMC rules this term simplifies as usual.
\end{remark}

Before we turn to (randomly) shifted rank-$1$ lattice rules let us consider unshifted rules first.

\subsection{Lower bounds for unshifted rules}
\label{subsect:unshifted_lattice rules}
Given natural numbers $n$ and $d$, an \emph{$n$-point rank-$1$ lattice rule} $Q_n(\bsz)$ is a QMC rule (i.e., it takes the form \link{QMC} with $w_0=\cdots=w_{n-1}=1$) which is fully determined by its generating vector $\bsz \in \Z_n^d=\{0,1,\ldots,n-1\}^d$. 
It uses points $\bst^{(j)}$ from an \emph{integration lattice} $L=L(\bsz)$ induced by $\bsz$:
\begin{align*}
  \bst^{(j)}
  &=
  \left\{ \frac{\bsz j}{n} \right\}
  =
  \frac{\bsz j}{n} \bmod{1}
  \qquad \text{for} \qquad 
  j = 0, 1, \ldots, n-1
  .
\end{align*}
This choice is reasonable since we have the following \emph{character property} over $\Z_n^d$ w.r.t.\ the trigonometric basis:
\begin{equation}\label{eq:char}
  \frac1n \sum_{j=0}^{n-1} \exp(\twopii (\bsh \cdot \bsz) j / n)
  =
  \begin{cases}
    1 & \text{if $\bsh \cdot \bsz \equiv 0 \pmod{n}$}, \\
    0, & \text{otherwise}.
  \end{cases}
\end{equation}
As usual, we collect those $\bsh\in\Z^d$ for which this sum is one in the set $L^\bot$, called the \emph{dual lattice}.

\begin{proposition}\label{prop:latticeerror}
For $d\in\N$ let $Q_n(\bsz)$ denote an arbitrary (unshifted) rank-$1$ lattice rule as defined above. Then its worst case error on the $I_d$-permutation-invariant subspace of $F_d(r_{\alpha,\bm{\beta}})$ satisfies
	\begin{align*}
		e^{\mathrm{wor}}(Q_{n}(\bsz); \SI_{I_d}(F_d(r_{\alpha,\bm{\beta}})))^2 
		= \sum_{\bszero \ne \bsh \in L^\bot} \frac{r^{-1}_{\alpha,\bm{\beta}}(\bsh)}{\#\S_d} \sum_{P \in \S_d} \ind{P(\bsh) \in L^\bot}.
	\end{align*}
\end{proposition}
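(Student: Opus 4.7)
The plan is to specialise the general exact error formula from \RefLem{lem:e2} to the QMC weights $w_j\equiv 1$ and to the lattice points $\bst^{(j)}=\{\bsz j/n\}$, and then to collapse the resulting exponential sums via the character property~\link{eq:char}.

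First I would substitute $w_j\equiv 1$ into the formula of \RefLem{lem:e2}. The prefactor $(1-\tfrac{2}{n}\sum_j w_j)$ then equals $-1$, so the ``initial'' contribution simplifies to $-r^{-1}_{\alpha,\bm{\beta}}(\bszero)$. Next, for each fixed $\bsh\in\Z^d$, I would rewrite
\[
\frac{1}{n}\sum_{\ell=0}^{n-1}\exp\!\bigl(-\twopii\,\bsh\cdot\bst^{(\ell)}\bigr)
=\frac{1}{n}\sum_{\ell=0}^{n-1}\exp\!\bigl(-\twopii\,(\bsh\cdot\bsz)\,\ell/n\bigr)
=\ind{\bsh\in L^\bot},
\]
using \link{eq:char}, and analogously, for each $P\in\S_d$,
\[
\frac{1}{n}\sum_{j=0}^{n-1}\exp\!\bigl(\twopii\,P(\bsh)\cdot\bst^{(j)}\bigr)
=\ind{P(\bsh)\in L^\bot}.
\]
Substituting these two identities back into the double-sum part of \RefLem{lem:e2} yields
\[
\sum_{\bsh\in\Z^d} r^{-1}_{\alpha,\bm{\beta}}(\bsh)\,\ind{\bsh\in L^\bot}\,
\frac{1}{\#\S_d}\sum_{P\in\S_d}\ind{P(\bsh)\in L^\bot}
=\sum_{\bsh\in L^\bot}\frac{r^{-1}_{\alpha,\bm{\beta}}(\bsh)}{\#\S_d}\sum_{P\in\S_d}\ind{P(\bsh)\in L^\bot}.
\]

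Finally I would separate off the contribution of $\bsh=\bszero$. Since $P(\bszero)=\bszero\in L^\bot$ for every permutation $P$, the inner average equals $1$, so the $\bsh=\bszero$ term of the last display contributes exactly $r^{-1}_{\alpha,\bm{\beta}}(\bszero)$. This cancels the $-r^{-1}_{\alpha,\bm{\beta}}(\bszero)$ coming from the initial part, and restricting the remaining sum to $\bszero\ne\bsh\in L^\bot$ gives precisely the claimed identity. No genuine obstacle arises here: the work is purely bookkeeping, and the only mild subtlety is to remember that the kernel of $F_d(r_{\alpha,\bm{\beta}})$ assigns the finite value $\beta_0^d=r^{-1}_{\alpha,\bm{\beta}}(\bszero)$ to the origin, which is what makes the above cancellation meaningful.
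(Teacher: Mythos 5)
Your proposal is correct and follows exactly the same route as the paper: substitute $w_j\equiv 1$ into Lemma~\ref{lem:e2}, collapse the exponential sums with the character property~\eqref{eq:char}, and observe that the $\bsh=\bszero$ term cancels the leading $-r^{-1}_{\alpha,\bm{\beta}}(\bszero)$. The only difference is that you spell out the final cancellation, which the paper leaves implicit.
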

\begin{proof}
The proof directly follows from the definition of $Q_n(\bsz)$, formula~\link{eq:char}, and \RefLem{lem:e2}:
\begin{align*}
	&e^{\mathrm{wor}}(Q_{n}(\bsz); \SI_{I_d}(F_d(r_{\alpha,\bm{\beta}})))^2 
	= -r^{-1}_{\alpha,\bm{\beta}}(\bm 0) 
	+ 	\sum_{\bsh \in \Z^d} r^{-1}_{\alpha,\bm{\beta}}(\bsh) \; \ind{\bsh \in L^\bot} \left( \frac{1}{\#\S_d} \sum_{P \in \S_d} \ind{P(\bsh) \in L^\bot} \right).
	\qedhere
\end{align*}
\end{proof}
\begin{remark}
This expression also holds for general rank lattice rules.
\end{remark}

Denoting the $n$th minimal worst case error among all unshifted lattice rules by 
\begin{equation*}
	e_{\mathrm{lat}}(n,d; \SI_{I_d}(F_d(r_{\alpha,\bm{\beta}})))
	= \inf_{\bsz\in\Z_n^d} e^{\mathrm{wor}}(Q_{n}(\bsz); \SI_{I_d}(F_d(r_{\alpha,\bm{\beta}}))),
	\qquad d,n\in\N, 
\end{equation*}
we obtain the following negative result.

\begin{theorem}\label{thm:unshifted}
For every $d,n\in\N$ and all choices $I_d\subseteq\{1,\ldots,d\}$, it holds
\begin{align*}
	e_{\mathrm{lat}}(n,d; \SI_{I_d}(F_d(r_{\alpha,\bm{\beta}})))
	&\geq \left( \sum_{\bszero \ne \bsh \in \Z^d} r^{-1}_{\alpha,\bm{\beta}}(n\bsh) \right)^{1/2}\\
	&\geq e(0,d; \SI_{I_d}(F_d(r_{\alpha,\bm{\beta}}))) \! \left( \left[ 1 + \frac{2\beta_1 \NR{\alpha}}{\beta_0} \, \frac{1}{n^{2\alpha}} \right]^d - 1 \right)^{1/2}\!\!. 
\end{align*}
\end{theorem}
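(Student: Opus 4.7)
The starting point is the exact error formula from \autoref{prop:latticeerror}, namely
\begin{equation*}
    e^{\mathrm{wor}}(Q_{n}(\bsz); \SI_{I_d}(F_d(r_{\alpha,\bm{\beta}})))^2
    = \sum_{\bszero \ne \bsh \in L^\bot} \frac{r^{-1}_{\alpha,\bm{\beta}}(\bsh)}{\#\S_d} \sum_{P \in \S_d} \ind{P(\bsh) \in L^\bot}.
\end{equation*}
The key observation is that, regardless of the generating vector $\bsz$, every non-zero vector of the form $n\bsh'$ with $\bsh'\in\Z^d\setminus\{\bszero\}$ lies in the dual lattice $L^\bot$ because $(n\bsh')\cdot\bsz \equiv 0 \pmod n$, and so do all of its permutations $P(n\bsh')=nP(\bsh')$. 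Hence, restricting the outer sum to indices of the form $\bsh = n\bsh'$, the inner averaging over $\S_d$ contributes exactly $1$, which immediately yields the $\bsz$-independent lower bound
\begin{equation*}
    e^{\mathrm{wor}}(Q_{n}(\bsz); \SI_{I_d}(F_d(r_{\alpha,\bm{\beta}})))^2 \;\geq\; \sum_{\bszero \ne \bsh' \in \Z^d} r^{-1}_{\alpha,\bm{\beta}}(n\bsh').
\end{equation*}
Taking the infimum over $\bsz\in\Z_n^d$ establishes the first inequality.

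For the second inequality I would use the product structure of $r^{-1}_{\alpha,\bm{\beta}}$, which reads $r^{-1}_{\alpha,\bm{\beta}}(\bsh)=\prod_{\ell=1}^d(\delta_{0,h_\ell}\beta_0+(1-\delta_{0,h_\ell})\beta_1 R(|h_\ell|)^{-2\alpha})$. Since $nh_\ell=0$ iff $h_\ell=0$, this product structure is preserved under the scaling $\bsh\mapsto n\bsh$. The submultiplicativity assumption on $R$, specifically $R(nm)\leq nR(m)$ for $m\in\N$, implies $R(n|h_\ell|)^{-2\alpha}\geq n^{-2\alpha}R(|h_\ell|)^{-2\alpha}$. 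Plugging this in and using $\sum_{\bsh\in\Z^d}=\prod_{\ell=1}^d\sum_{h_\ell\in\Z}$ together with $\sum_{m\geq 1}R(m)^{-2\alpha}=\NR{\alpha}$, the sum factorizes:
\begin{equation*}
    \sum_{\bszero \ne \bsh' \in \Z^d} r^{-1}_{\alpha,\bm{\beta}}(n\bsh')
    \;\geq\; \prod_{\ell=1}^d\!\left(\beta_0 + \frac{2\beta_1 \NR{\alpha}}{n^{2\alpha}}\right) - \beta_0^d
    \;=\; \beta_0^d\!\left(\left[1+\frac{2\beta_1 \NR{\alpha}}{\beta_0\,n^{2\alpha}}\right]^d - 1\right).
\end{equation*}
Finally I invoke \RefLem{lem:SdMd}, which gives $e(0,d;\SI_{I_d}(F_d(r_{\alpha,\bm{\beta}})))^2 = S_d(K_{d,I_d}) = \beta_0^d$, so that the factor $\beta_0^d$ can be identified with the squared initial error; taking square roots yields the desired second inequality.

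The only genuinely non-routine step is the first one: recognizing that the multiples $n\bsh'$ (together with their images under arbitrary permutations) always populate the dual lattice and thus provide a universal lower bound independent of $\bsz$, so that permutation-invariance cannot help beat this floor. Everything afterwards is a direct factorization using the tensor product form of $r_{\alpha,\bm{\beta}}$ and the growth hypothesis on $R$.
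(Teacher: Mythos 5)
Your proposal is correct and follows essentially the same route as the paper's proof: first observing that $n\Z^d\subseteq L^\bot$ together with $P(n\bsh')=nP(\bsh')\in n\Z^d$ makes the inner averaged indicator identically $1$ on multiples of $n$ (yielding a $\bsz$-independent floor), and then factorizing the resulting sum via the tensor structure of $r_{\alpha,\bm\beta}$ and the growth hypothesis $R(nm)\leq nR(m)$. The identification $\beta_0^d=S_d=e(0,d;\cdot)^2$ via \RefLem{lem:SdMd} is also exactly the paper's final step.
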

\begin{proof}
For any lattice rule $Q_n(\bsz)$ we always have that $n \Z^d \subseteq L^\bot$.
In view of \RefProp{prop:latticeerror} this establishes the lower bound
\begin{align*}
	&e^{\mathrm{wor}}(Q_{n}(\bsz); \SI_{I_d}(F_d(r_{\alpha,\bm{\beta}})))^2 
	\geq \sum_{\bszero \ne \bsh \in n\Z^d} \frac{r^{-1}_{\alpha,\bm{\beta}}(\bsh)}{\#\S_d} \sum_{P \in \S_d} \ind{P(\bsh) \in L^\bot}
	= \sum_{\bszero \ne \bsh \in \Z^d} r^{-1}_{\alpha,\bm{\beta}}(n\bsh). 
\end{align*}
The properties of $r_{\alpha,\bm{\beta}}$ and $R$ moreover yield
\begin{align*}
	\sum_{\bszero \ne \bsh \in \Z^d} r^{-1}_{\alpha,\bm{\beta}}(n\bsh)
	&=
	\prod_{\ell=1}^d \left[ \beta_0 + 2\beta_1 \sum_{m=1}^\infty R(n m)^{-2\alpha} \right] - \beta_0^d \\
	&= \beta_0^d \left( \left[ 1 + \frac{2\beta_1}{\beta_0} \sum_{m=1}^\infty R(n m)^{-2\alpha} \right]^d - 1 \right) \\
	&\geq \beta_0^d \left( \left[ 1 + \frac{2\beta_1 \NR{\alpha}}{\beta_0} \, \frac{1}{n^{2\alpha}} \right]^d - 1 \right).
\end{align*}
Since $\beta_0^d = S_d = e(0,d; \SI_{I_d}(F_d(r_{\alpha,\bm{\beta}})))^2$ taking the square root and passing to the infimum over all $\bsz\in\Z_n^d$ proves the claim.
\end{proof}

\begin{remark}\label{rem:Bernoulli}
Note that for fixed $n$ the term in the brackets grows exponentially in the dimension~$d$.
From Bernoulli's inequality it moreover follows that for all $d,n\in\N$
\begin{equation*}
	\left[ 1 + \frac{2\beta_1 \NR{\alpha}}{\beta_0} \, \frac{1}{n^{2\alpha}} \right]^d - 1
	\geq c \, d \, n^{-2\alpha},
\end{equation*}
where $c=2\beta_1 \NR{\alpha}/\beta_0$ is independent of $d$ and $n$.
Furthermore, this estimate is sharp (up to some absolute constant), provided that $n$ grows at least polynomially with $d$. To see this assume that $n$ satisfies $c\,d \, n^{-2\alpha} \leq c_1$ for some $0 < c_1 < 1$ and $c$ as before. 
Then $1+x \leq \exp(x)$ for all $x\geq 0$, and $\exp(y)\leq 1/(1-y)$ for all $y<1$, implies
\begin{align*}
	\left[ 1 + \frac{c}{n^{2\alpha}} \right]^d - 1
	\leq \exp\!\left( \frac{c\,d}{n^{2\alpha}} \right) - 1 
	\leq \frac{1}{1 - c\,d\, n^{-2\alpha}} - 1 
	&= \frac{c\,d\, n^{-2\alpha}}{1 - c\,d\, n^{-2\alpha}} 
	\leq \frac{c}{1-c_1} \, d \, n^{-2\alpha}
\end{align*}
which proves the claim.
\end{remark}

We derive the following tractability result which is in sharp contrast to \autoref{thm:tractability}.

\begin{corollary}
	Consider the integration problem on the $I_d$-permutation-invariant subspaces $\SI_{I_d}(F_d(r_{\alpha,\bm{\beta}}))$ in the worst case setting w.r.t. the normalized error criterion. Then
	\begin{itemize}
		\item[$\bullet$] the optimal rate of convergence which can be attained by unshifted lattice rules $Q_n(\bsz)$ is upper bounded by $\alpha$.
		\item[$\bullet$] independent of the problem parameters $I_d$ and $r_{\alpha,\bm{\beta}}$, the class of unshifted lattice rules $Q_n(\bsz)$ is too small to obtain strong polynomial tractability.
	\end{itemize}
\end{corollary}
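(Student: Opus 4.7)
The plan is to derive both bullets directly from \autoref{thm:unshifted} combined with the Bernoulli-type estimate already noted in \autoref{rem:Bernoulli}. Since we work in the normalized error criterion, the central quantity is the ratio $e_{\mathrm{lat}}(n,d;\cdot)/e(0,d;\cdot)$, for which the theorem gives
\begin{equation*}
    \frac{e_{\mathrm{lat}}(n,d; \SI_{I_d}(F_d(r_{\alpha,\bm{\beta}})))^2}{e(0,d; \SI_{I_d}(F_d(r_{\alpha,\bm{\beta}})))^2}
    \;\geq\; \left[ 1 + \frac{c}{n^{2\alpha}} \right]^d - 1
    \;\geq\; \frac{c\,d}{n^{2\alpha}},
\end{equation*}
where $c = 2\beta_1 \NR{\alpha}/\beta_0 > 0$ depends only on the problem parameters (and not on $d$ or $n$), and the last step is Bernoulli's inequality $(1+x)^d \geq 1+dx$ for $x \geq 0$, $d\in\N$. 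Note this lower bound is valid for \emph{every} choice of $I_d \subseteq \{1,\ldots,d\}$, since the proof of \autoref{thm:unshifted} never used any specific structure of $I_d$.

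For the first bullet, I would simply fix any dimension $d \geq 1$ and read off
\begin{equation*}
    e_{\mathrm{lat}}(n,d; \SI_{I_d}(F_d(r_{\alpha,\bm{\beta}})))
    \;\geq\; e(0,d;\cdot)\,\sqrt{c\,d}\, n^{-\alpha},
\end{equation*}
so no unshifted lattice rule can enjoy a worst case error decaying faster than $n^{-\alpha}$ (even for $d=1$). Hence the best attainable rate of convergence is bounded by $\alpha$.

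For the second bullet, I would argue by contradiction. Suppose strong polynomial tractability held for some valid problem parameters, i.e., there are $C, p > 0$ with $n_{\mathrm{lat}}(\varepsilon,d) \leq C\,\varepsilon^{-p}$ for all $d$ and $\varepsilon\in(0,1)$, where $n_{\mathrm{lat}}$ is the information complexity restricted to unshifted lattice rules. The bound above implies that in order to guarantee $e_{\mathrm{lat}}(n,d;\cdot) \leq \varepsilon\,e(0,d;\cdot)$ we must have
\begin{equation*}
    n \;\geq\; (c\,d)^{1/(2\alpha)}\,\varepsilon^{-1/\alpha},
\end{equation*}
and therefore $n_{\mathrm{lat}}(\varepsilon,d) \geq (c\,d)^{1/(2\alpha)}\,\varepsilon^{-1/\alpha}$ for every $d$. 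Fixing any $\varepsilon \in (0,1)$ and letting $d\to\infty$ contradicts the hypothesized $d$-independent bound, which rules out strong polynomial tractability regardless of the choice of $I_d$ and $r_{\alpha,\bm{\beta}}$.

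There is essentially no obstacle: both claims are elementary consequences of the lower bound in \autoref{thm:unshifted} and Bernoulli's inequality. The only mild subtlety is to make explicit that the dimensional growth factor $d^{1/(2\alpha)}$ in the complexity bound is unavoidable for \emph{every} admissible parameter setting, which is why the second bullet holds uniformly in $I_d$ and $r_{\alpha,\bm{\beta}}$.
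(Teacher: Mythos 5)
Your proof is correct and follows exactly the route the paper intends: the corollary is stated without a separate proof precisely because it is an immediate consequence of \RefThm{thm:unshifted} and the Bernoulli-type estimate in \RefRem{rem:Bernoulli}, which is what you spell out. Both bullets are recovered correctly, and your explicit note that the lower bound holds uniformly in $I_d$ (and with a positive constant $c$ for any admissible $r_{\alpha,\bm{\beta}}$) is exactly the observation needed for the second bullet.
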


\subsection{Existence of good shifted rank-$1$ lattice rules}
\label{subsect:shifted_lattice_rules}
In contrast to the negative result for unshifted lattice rules from the previous section, we will show here that there exist shifted lattice rules which satisfy the bound \link{errorBound} in \autoref{thm:tractability}.

Given $n$ and $d$, an $n$-point \emph{shifted rank-$1$ lattice rule} consists of an unshifted lattice rule $Q_n(\bsz)$, with generating vector $\bsz \in \Z_n^d$, whose points are shifted by some fixed $\bsDelta \in [0,1)^d$ modulo~1, i.e.,
\begin{equation*}
  \bst^{(j)}
  = \left\{ \frac{\bsz \,j}{n} + \bsDelta \right\}
  = \left( \frac{\bsz \, j}{n} + \bsDelta \right) \bmod{1}
  \qquad \text{for} \qquad j=0,1,\ldots,n-1.
\end{equation*}
In what follows such a cubature rule will be denoted by $Q_n(\bsz)+\bsDelta$.

To show that there exist good shifts $\bsDelta$ it is convenient to analyze the \emph{root mean squared worst case error}
\begin{equation*}
	E(Q_n(\bsz)) = \left( \int_{[0,1)^d} e^{\mathrm{wor}}(Q_{n}(\bsz)+\bsDelta; \SI_{I_d}(F_d(r_{\alpha,\bm{\beta}})))^2 \rd\bsDelta \right)^{1/2}
\end{equation*}
which is related to the \emph{shift-invariant kernel} (associated to $K_{d,I_d}$)
\begin{equation}\label{def_Kshinv}
  K_{d,I_d}^{\shinv}(\bsx, \bsy)
  =  \int_{[0,1)^d} K_{d,I_d}(\{\bsx+\bsDelta\},\{\bsy+\bsDelta\}) \rd\bsDelta,
  \qquad \bsx,\bsy\in[0,1]^d,
\end{equation}
as the next proposition shows.

\begin{proposition}\label{prop:Kshinv}
	Let $d\in\N$ and $I_d\subseteq\{1,\ldots,d\}$. Then the shift-invariant kernel can be written as
\begin{equation*}
	K_{d,I_d}^{\shinv}(\bsx, \bsy)
	= \sum_{\bsk \in \nabla_d} \frac{r_{\alpha,\bm{\beta}}^{-1}(\bsk)}{\#\S_d} \sum_{P \in \S_d} \exp\left(\twopii P(\bsk) \cdot (\bsx-\bsy)\right), \qquad \bsx,\bsy\in[0,1]^d.
\end{equation*}
Moreover, for every unshifted rank-$1$ lattice rule $Q_n(\bsz)$ we have
\begin{equation}\label{eq:mse}
	E(Q_n(\bsz))^2 
	= e^{\mathrm{wor}}(Q_{n}(\bsz); H_{d,I_d}^{\shinv})^2 
	= \sum_{\bm{0}\neq\bsk \in \nabla_d} \frac{r^{-1}_{\alpha,\bm{\beta}}(\bsk)}{\#\S_d} \sum_{P \in \S_d} \ind{P(\bsk) \in L^{\bot}},
\end{equation}
where $H_{d,I_d}^{\shinv}$ denotes the RKHS with kernel $K_{d,I_d}^{\shinv}$ and $L^{\bot}$ is the dual lattice induced by $\bsz\in\Z_n^d$.
\end{proposition}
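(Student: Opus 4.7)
My plan is to prove the three identities in turn: first the Fourier representation of the shift-invariant kernel $K_{d,I_d}^{\shinv}$, then the identity $E(Q_n(\bsz))^2 = e^{\mathrm{wor}}(Q_n(\bsz); H_{d,I_d}^{\shinv})^2$, and finally its explicit evaluation via the character property \link{eq:char}.

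For the kernel formula, I would insert the expansion \link{sym_kernel_2} of $K_{d,I_d}$ into the definition \link{def_Kshinv}. Since $\bsh\in\Z^d$, the fractional parts inside the exponentials can be dropped, and using the adjoint identity $\bsh \cdot P(\bsv) = P^{-1}(\bsh) \cdot \bsv$ the exponent splits as
\begin{gather*}
\bsh \cdot \bigl(P(\bsx+\bsDelta) - (\bsy+\bsDelta)\bigr)
= \bsh \cdot (P(\bsx)-\bsy) + (P^{-1}(\bsh) - \bsh) \cdot \bsDelta.
\end{gather*}
Integrating over $\bsDelta \in [0,1)^d$ and invoking orthogonality of the trigonometric system forces $P^{-1}(\bsh) = \bsh$, so only the stabilizer of $\bsh$ (of size $\M_d(\bsh)!$) survives; on that stabilizer the remaining exponential collapses to $\exp(\twopii \bsh \cdot (\bsx-\bsy))$, which no longer depends on $P$. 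Hence
\begin{gather*}
K_{d,I_d}^{\shinv}(\bsx, \bsy)
= \sum_{\bsh \in \Z^d} \frac{r^{-1}_{\alpha,\bm{\beta}}(\bsh)\, \M_d(\bsh)!}{\#\S_d}\, \exp(\twopii \bsh \cdot (\bsx-\bsy)),
\end{gather*}
and rewriting the $\bsh$-sum over orbit representatives $\bsk \in \nabla_d$ via \link{eq:transform} (using that $r_{\alpha,\bm{\beta}}$ and $\M_d$ are $\S_d$-invariant, so the $\M_d(\bsk)!$ cancels) yields the stated closed form.

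For the mean-square identity I would apply Fubini to \link{eq:e2-K} with $w_j \equiv 1$ and $\bst^{(j)} = \{\bsz j/n + \bsDelta\}$. The double-integral term is $\bsDelta$-independent and equals $S_d$; the linear term integrates to $-2S_d$, since for each fixed $j$ the substitution $\bsv = \{\bsz j/n + \bsDelta\}$ sweeps the unit cube and gives $\int\int K_{d,I_d}(\bsx,\bsv)\rd\bsx\rd\bsv = S_d$; and the quadratic term becomes $n^{-2}\sum_{j,\ell} K_{d,I_d}^{\shinv}(\{\bsz j/n\},\{\bsz\ell/n\})$ after the same substitution and recognition of the defining integral \link{def_Kshinv}. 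Term-by-term this matches \link{eq:e2-K} applied to the unshifted rule $Q_n(\bsz)$ on $H_{d,I_d}^{\shinv}$, whose initial error and truncated linear integral both reduce to $S_d$ (only the $\bsk=\bszero$ Fourier mode survives integration against $\bsx$).

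Finally, inserting the representation from the first paragraph into this worst-case error, the inner double sum over $j,\ell$ factorizes as $|n^{-1}\sum_{j=0}^{n-1}\exp(\twopii(P(\bsk)\cdot\bsz)\,j/n)|^2$, which by the character property \link{eq:char} equals $\ind{P(\bsk)\in L^\bot}$; the $\bsk=\bszero$ contribution is $r^{-1}_{\alpha,\bm{\beta}}(\bszero) = S_d$ and exactly cancels the leading $-S_d$, leaving the claimed expression \link{eq:mse}. The main technical point is the kernel computation in the second paragraph: one has to track how the permutation acts (via its adjoint) on integer Fourier indices and verify that the resulting stabilizer factor combines with \link{eq:transform} to give a clean $\nabla_d$-sum. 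The remaining steps are routine applications of Fubini, translation invariance of the integration domain, and the character property.
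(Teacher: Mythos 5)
Your proof is correct and follows essentially the same route as the paper's: substitute the Fourier expansion of $K_{d,I_d}$ into $\link{def_Kshinv}$, integrate out the shift to leave only the stabilizer contributions $\M_d(\bsh)!$, rewrite via $\link{eq:transform}$, then invoke $\link{eq:e2-K}$ plus the fact that $\int_{[0,1]^d}K_{d,I_d}(\bsx,\bsy)\rd\bsx=r^{-1}_{\alpha,\bm{\beta}}(\bszero)$ to identify $E(Q_n(\bsz))^2$ with the worst-case error on $H_{d,I_d}^{\shinv}$, and finish with the character property $\link{eq:char}$. The only surface differences — working with $P(\bsx)$ via the adjoint rather than switching to $P(\bsh)$, and evaluating the linear term by a change of variables in $\bsDelta$ rather than directly citing the constancy of $\int K\rd\bsx$ — are mathematically equivalent and do not change the argument.
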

\begin{proof}
\emph{Step 1.} From \link{def_Kshinv} and \link{sym_kernel_2} we derive that $K_{d,I_d}^{\shinv}(\bsx, \bsy)$ equals
\begin{align*}
  &\int_{[0,1)^d}
    \sum_{\bsh \in \Z^d} \frac{r_{\alpha,\bm{\beta}}^{-1}(\bsh)}{\#\S_d}
    \exp\left(-\twopii \bsh \cdot \{\bsy + \bsDelta\}\right)
    \sum_{P \in \S_d} \exp\left(\twopii P(\bsh) \cdot \{\bsx + \bsDelta\}\right)
  \rd\bsDelta
  \\
  &\qquad =
    \sum_{\bsh \in \Z^d} \frac{r_{\alpha,\bm{\beta}}^{-1}(\bsh)}{\#\S_d} \exp\left(-\twopii \bsh \cdot \bsy\right) 
 \sum_{P \in \S_d} \exp\left(\twopii P(\bsh) \cdot \bsx\right) \int_{[0,1)^d} \exp(\twopii [P(\bsh)-\bsh] \cdot \bsDelta) \rd\bsDelta, 
\end{align*}
where the latter integral is $1$ if $\bsh = P(\bsh)$ and $0$, otherwise. 
By definition, for $\bsh\in\Z^d$ there are exactly $\M_d(\bsh)!$ different permutations $P\in\S_d$ such that $\bsh = P(\bsh)$.
Consequently, using \link{eq:transform} we obtain that
\begin{align}
  \label{eq:KshinvZd}
  K_{d,I_d}^{\shinv}(\bsx, \bsy)
  &= \sum_{\bsh \in \Z^d} \frac{r_{\alpha,\bm{\beta}}^{-1}(\bsh)}{\#\S_d} \exp\left(\twopii \bsh \cdot (\bsx-\bsy)\right) \, \M_d(\bsh)! 
  \\
  \nonumber
  &= \sum_{\bsk \in \nabla_d} \frac{r_{\alpha,\bm{\beta}}^{-1}(\bsk)}{\#\S_d} \sum_{P \in S_d} \exp\left(\twopii P(\bsk) \cdot (\bsx-\bsy)\right)
\end{align}
for every $\bsx,\bsy\in[0,1]^d$.

\emph{Step 2.}
We use formula \link{eq:e2-K} for the worst case error in terms of the reproducing kernel, together with \link{eq:vanish_intK} from the proof of \RefLem{lem:e2} (see \autoref{sect:appendix}) and \link{def_Kshinv}, to obtain
\begin{align*}
	E(Q_n(\bsz))^2 
	&= -r^{-1}_{\alpha,\bm{\beta}}(\bszero) + \frac{1}{n^2} \sum_{j,\ell=0}^{n-1} \int_{[0,1)^d} K_{d,I_d}(\{\bm{s}^{(j)}+\bsDelta\},\{\bm{s}^{(\ell)}+\bsDelta\}) \rd\bsDelta \\
	&= -r^{-1}_{\alpha,\bm{\beta}}(\bszero) + \frac{1}{n^2} \sum_{j,\ell=0}^{n-1} K_{d,I_d}^{\shinv}(\bm{s}^{(j)}, \bm{s}^{(\ell)}) \\
	&= e^{\mathrm{wor}}(Q_{n}(\bsz); H_{d,I_d}^{\shinv})^2,
\end{align*}
where $\bm{s}^{(j)}=\{\bsz \, j / n\}$, $j=0,\ldots,n-1$, denote the nodes used by $Q_n(\bsz)$.
The rest of the claim now follows from the representation derived in Step 1.
\end{proof}

Subsequently, we deduce the existence of good shifts. At this point we restrict ourselves to lattice rules with a prime number of points as this simplifies proofs.
\begin{theorem}\label{thm:rmse}
	For $d\in\N$ let $I_d\subseteq\{1,\ldots,d\}$. Given a prime number $n\in\N$ let $Q_n(\bsz)$ denote an arbitrary (unshifted) rank-$1$ lattice rule for the integration problem on the $I_d$-permutation-invariant subspace of $F_d(r_{\alpha,\bm{\beta}})$. Then
	\begin{itemize}
	\item[$\bullet$] for some $\bsDelta^*=\bsDelta^*(\bsz)\in[0,1)^d$
	\begin{equation*}
		e^{\mathrm{wor}}(Q_{n}(\bsz)+\bsDelta^*; \SI_{I_d}(F_d(r_{\alpha,\bm{\beta}})))
		\leq E(Q_n(\bsz)) 
		\leq e^{\mathrm{wor}}(Q_{n}(\bsz); \SI_{I_d}(F_d(r_{\alpha,\bm{\beta}}))),
	\end{equation*}
	\ie there exists a shift such that $Q_{n}(\bsz)+\bsDelta^*$ performs better than $Q_n(\bsz)$.
	\item[$\bullet$] the root mean squared worst case error w.r.t.\ $\bsDelta\in[0,1)^d$ satisfies
	\begin{align*}
		E(Q_n(\bsz))		
		&\geq e(0,d; \SI_{I_d}(F_d(r_{\alpha,\bm{\beta}}))) \left( \left[ 1 + \frac{2\beta_1 \NR{\alpha}}{\beta_0} \, \frac{1}{n^{2\alpha}} \right]^{d-\#I_d} - 1 \right)^{1/2} \\
		& \qquad\quad \qquad \qquad \qquad \qquad \qquad \times \left(1 + 2 \sum_{\ell = 1}^{\#I_d} \left[ \frac{\beta_1\, \NR{\alpha\ell}^{1/\ell}}{\beta_0} \, \frac{1}{n^{2\alpha}} \right]^{\ell} \right)^{1/2}
	\end{align*}
	if $\# I_d<d$, and
	\begin{equation*}
		E(Q_n(\bsz))		
		\geq e(0,d; \SI_{I_d}(F_d(r_{\alpha,\bm{\beta}}))) \left(2 \sum_{\ell = 1}^{d} \left[ \frac{\beta_1\,\NR{\alpha\ell}^{1/\ell}}{\beta_0} \, \frac{1}{n^{2\alpha}} \right]^{\ell} \right)^{1/2}
	\end{equation*}
	if $\# I_d=d$. In particular,
	\begin{equation*}
		E(Q_n(\bsz))
		\geq c \, \max\{d-\#I_d,1\}^{1/2} \, n^{-\alpha},
	\end{equation*}
	where $c=\sqrt{2\beta_1 \NR{\alpha}/\beta_0}$ does not depend on $d$ and $n$.
	\end{itemize}
\end{theorem}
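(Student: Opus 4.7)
My plan is to combine the two explicit sum representations already available, namely \RefProp{prop:latticeerror} for $e^{\mathrm{wor}}(Q_n(\bsz);\SI_{I_d}(F_d(r_{\alpha,\bm{\beta}})))^2$ and \RefProp{prop:Kshinv} for $E(Q_n(\bsz))^2$, with the elementary geometric fact that $n\Z^d\subseteq L^\bot$ for every integration lattice. The sandwich inequality in the first bullet will come from a pointwise, term-by-term comparison of these two sums after both have been rewritten over the common index set $\nabla_d$, together with the standard mean-value argument that guarantees a shift attaining at most the mean. The lower bounds in the second bullet will come from truncating the sum defining $E(Q_n(\bsz))^2$ to indices $\bsk\in n\Z^d$ and then applying a tensor factorization with respect to $I_d$ and its complement.

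For the sandwich I first rewrite \RefProp{prop:latticeerror} via \link{eq:transform}, exploiting that $\S_d$ is a group under composition and that $r^{-1}_{\alpha,\bm{\beta}}$ is $\S_d$-invariant, to obtain
\begin{equation*}
	e^{\mathrm{wor}}(Q_n(\bsz);\SI_{I_d}(F_d(r_{\alpha,\bm{\beta}})))^2
	= \sum_{\bszero\ne\bsk\in\nabla_d} \frac{r^{-1}_{\alpha,\bm{\beta}}(\bsk)}{\#\S_d\,\M_d(\bsk)!}\,A(\bsk)^2,
	\qquad A(\bsk):=\sum_{P\in\S_d}\ind{P(\bsk)\in L^\bot}.
\end{equation*}
Comparing with \link{eq:mse}, each summand of $e^{\mathrm{wor}}(Q_n(\bsz);\cdot)^2-E(Q_n(\bsz))^2$ equals $r^{-1}_{\alpha,\bm{\beta}}(\bsk)\,A(\bsk)[A(\bsk)-\M_d(\bsk)!]/(\#\S_d\,\M_d(\bsk)!)$, which is non-negative since the orbit-stabilizer theorem forces $A(\bsk)\in\{0,\M_d(\bsk)!,2\M_d(\bsk)!,\ldots\}$. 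This yields $E(Q_n(\bsz))\le e^{\mathrm{wor}}(Q_n(\bsz);\cdot)$, and the lower half of the sandwich then follows at once from the very definition of $E$ as the $L_2$-average of $e^{\mathrm{wor}}(Q_n(\bsz)+\bsDelta;\cdot)$ over $\bsDelta\in[0,1)^d$.

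For the lower bound on $E(Q_n(\bsz))^2$, the inclusion $n\Z^d\subseteq L^\bot$ makes $A(\bsk)=\#\S_d$ whenever $\bsk\in\nabla_d\cap n\Z^d$, so \link{eq:mse} gives $E(Q_n(\bsz))^2\ge\sum_{\bszero\ne\bsk'\in\nabla_d} r^{-1}_{\alpha,\bm{\beta}}(n\bsk')$. Separating the coordinates in $I_d$ from those in its complement factorizes this as $A_s\cdot B_{d-s}-\beta_0^d$ with $s=\#I_d$. The product-form factor $B_{d-s}$ is bounded below by $\beta_0^{d-s}(1+Y)^{d-s}$ with $Y=2\beta_1\NR{\alpha}/(\beta_0 n^{2\alpha})$, using $R(nm)\le nR(m)$. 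The more delicate term is $A_s$, for which I would group the sorted tuples $\bsk_I\in\Z^s$ by the number $\ell$ of non-zero entries and the split $p+q=\ell$ into negatives and positives, reducing $A_s$ to $\sum_{\ell=0}^s\beta_0^{s-\ell}\beta_1^\ell\sum_{p+q=\ell}T_pT_q$ with $T_k=\sum_{0<m_1\le\cdots\le m_k}\prod_i R(nm_i)^{-2\alpha}$. Keeping only the two extremal terms $p\in\{0,\ell\}$ gives $\sum_{p+q=\ell}T_pT_q\ge 2T_\ell$, and the crude diagonal choice $m_1=\cdots=m_\ell$ together with $R(nm)\le nR(m)$ yields $T_\ell\ge\NR{\alpha\ell}/n^{2\alpha\ell}$, so that $A_s\ge\beta_0^s(1+X)$ with $X$ the inner sum in the theorem. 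The algebraic identity $(1+X)(1+Y)^{d-s}-1=(1+X)[(1+Y)^{d-s}-1]+X$ then cleanly separates the two claimed bounds: for $\#I_d<d$ one drops the additive $X$, and for $\#I_d=d$ the second factor equals $1$, leaving $\beta_0^d X$. The final ``in particular'' bound follows by Bernoulli's inequality $(1+Y)^{d-s}-1\ge(d-s)Y$ in the first case and by retaining only the $\ell=1$ summand of $X$ in the second. The main obstacle I anticipate is the orbit-stabilizer bookkeeping needed to identify the two distinct weights $r^{-1}_{\alpha,\bm{\beta}}(\bsk)/(\#\S_d\,\M_d(\bsk)!)$ and $r^{-1}_{\alpha,\bm{\beta}}(\bsk)/\#\S_d$ correctly when translating between the three index sets $\Z^d$, $L^\bot$, and $\nabla_d$ during the term-wise comparison.
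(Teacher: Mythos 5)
Your proposal is correct, and it follows the same overall strategy as the paper: combine the explicit representations of $E(Q_n(\bsz))^2$ and $e^{\mathrm{wor}}(Q_n(\bsz);\cdot)^2$, use the inclusion $n\Z^d\subseteq L^\bot$, tensorize over $I_d$ versus its complement, and bound each factor. The small differences are in execution. For the first bullet the paper simply notes, working over $L^\bot$, that $\M_d(\bsh)!$ counts the permutations fixing $\bsh$ and these are a subset of the permutations carrying $\bsh$ into $L^\bot$; your rewrite over $\nabla_d$ with the orbit--stabilizer argument ($A(\bsk)$ a multiple of $\M_d(\bsk)!$) is a valid but slightly heavier variant of the same observation. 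For the second bullet, the paper does not truncate to all of $\nabla_d\cap n\Z^d$: it defines a smaller index set $\J_d\subset n\Z^d$ in which the nonzero $I_d$-entries are forced to be equal (and of the same sign), so that $\M_d(\bsh)!=(\#\setu)!\,(\#I_d-\#\setu)!$ and the restricted sum factorizes \emph{exactly}, with the only remaining estimates being on $\sum_{\bsk}r^{-1}(n\bsk)$ and $\sum_{h\neq0}R(n|h|)^{-2\alpha\ell}$. You instead truncate to the full set $\nabla_d\cap n\Z^d$, factorize as $A_s\cdot B_{d-s}-\beta_0^d$, and then lower-bound $A_s$ via the sign-split bookkeeping $\sum_{p+q=\ell}T_pT_q\geq 2T_\ell$ followed by the diagonal bound $T_\ell\geq\NR{\alpha\ell}/n^{2\alpha\ell}$. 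Your chain of bounds collapses to exactly the same inequality as the paper's $\J_d$-restriction, so the final estimate is identical; the trade-off is that the paper front-loads the restriction (making the combinatorics trivial) while you start with a sharper intermediate sum and do the combinatorics explicitly. Both paths are sound, and the remaining algebra (the identity $(1+X)(1+Y)^{d-s}-1=(1+X)[(1+Y)^{d-s}-1]+X$, Bernoulli's inequality, and the case $\#I_d=d$) is handled correctly in your write-up.
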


\begin{proof}
Let $Q_n(\bsz)$ be given. 
From\RefEq{eq:KshinvZd} we obtain
\begin{align}
    \label{eq:mse2}
	E(Q_n(\bsz))^2 
    &=
    \sum_{\bszero \ne \bsh \in L^{\bot}} \frac{r^{-1}_{\alpha,\bm{\beta}}(\bsh)}{\#\S_d} \, \M_d(\bsh)!
    \\
    \nonumber
    &\le
    \sum_{\bszero \ne \bsh \in L^{\bot}} \frac{r^{-1}_{\alpha,\bm{\beta}}(\bsh)}{\#\S_d} \, \sum_{P \in S_d} \ind{P(\bsh) \in L^{\bot}}
    =
    e^{\mathrm{wor}}(Q_{n}(\bsz); \SI_{I_d}(F_d(r_{\alpha,\bm{\beta}})))^2
    ,
\end{align}
where the last line is the squared worst case error for the unshifted lattice rule from \RefProp{prop:latticeerror}. The inequality holds since, by definition, $\M_d(\bsh)!$ is the number of $P\in\S_d$ such that $P(\bsh)=\bsh$ and we sum over all $\bsh \in L^{\bot}$.
Due to the mean value property, there clearly exists a shift $\bsDelta^*\in[0,1)^d$ such that
\begin{equation*}
	e^{\mathrm{wor}}(Q_{n}(\bsz) + \bsDelta^*; \SI_{I_d}(F_d(r_{\alpha,\bm{\beta}})))
	\leq E(Q_n(\bsz)).
\end{equation*}

To prove the lower bounds 
we again use the fact that $n\Z^d\subseteq L^{\bot}$. 
To this end, we first consider the case $I_d \subsetneq \{1,\ldots,d\}$, i.e., $I_d^c = \{1,\ldots,d\} \setminus I_d  \neq\emptyset$. By $\J_d$ we denote the set of all indices $\bsh=(h_1,\ldots,h_d)\in\Z^d$ such that, for some $\setu \subseteq I_d$,
\begin{equation*}
	\bsh\big|_{I_d^c} = n \bsk
	\qquad \text{and}  \qquad
	h_j = \begin{cases}
		n h, & \text{if } j \in \setu, \\
		0, & \text{if } j\in I_d \setminus \setu
	\end{cases}
\end{equation*}
for $\bsk\in \Z^{d-\# I_d} \setminus \{\bm{0}\}$ and $h\in \Z \setminus \{0\}$.
By construction $\J_d \subseteq L^{\bot} \setminus \{\bm{0}\}$ and for all $\bsh\in\J_d$ we have $\M_d(\bsh)! = (\#\setu)!\,(\#I_d-\#\setu)!$,  as well as
\begin{align*}
	r^{-1}_{\alpha,\bm{\beta}}(\bsh) 
	&= r^{-1}_{\alpha,\bm{\beta}}(n\bsk) \, \beta_0^{\# I_d - \# \setu} \, \beta_1^{\#\setu} \, R(n\abs{h})^{-2\alpha\#\setu} \\
	&= \beta_0^{\# I_d} \, r^{-1}_{\alpha,\bm{\beta}}(n\bsk) \, \left[ \frac{\beta_1}{\beta_0} \right]^{\#\setu}R(n\abs{h})^{-2\alpha\#\setu}. 
\end{align*}
Thus, \link{eq:mse2} implies
\begin{align*}
	E(Q_n(\bsz))^2 
	&\geq \sum_{\substack{\bsh \in \J_d \\ (\setu \subseteq I_d)}} \frac{\M_d(\bsh)!}{\#\S_d} \, r^{-1}_{\alpha,\bm{\beta}}(\bsh)\\
	&= \sum_{\substack{\bm{0} \neq \bsk\in \Z^{d-\# I_d} \\ (\setu = \emptyset)}} \frac{(\#I_d)!}{(\#I_d)!} \, \beta_0^{\# I_d} \, r^{-1}_{\alpha,\bm{\beta}}(n\bsk) \\
		&\quad\qquad + \sum_{\ell = 1}^{\#I_d} \sum_{\substack{\setu\subseteq I_d\\\#\setu=\ell}} \sum_{\bm{0} \neq \bsk\in \Z^{d-\# I_d}} \sum_{0 \neq h \in \Z} \frac{\ell!\,(\#I_d-\ell)!}{(\#I_d)!} \, \beta_0^{\# I_d} \, r^{-1}_{\alpha,\bm{\beta}}(n\bsk) \, \left[ \frac{\beta_1}{\beta_0} \right]^{\ell} R(n\abs{h})^{-2\alpha\ell} \\
	&= \beta_0^{\# I_d} \left(\sum_{\bm{0} \neq \bsk\in \Z^{d-\# I_d}} r^{-1}_{\alpha,\bm{\beta}}(n\bsk) \right) \left(1 + \sum_{\ell = 1}^{\#I_d} \sum_{\substack{\setu\subseteq I_d\\\#\setu=\ell}} \frac{1}{\binom{\#I_d}{\ell}} \! \left[ \frac{\beta_1}{\beta_0} \right]^{\ell} \! \sum_{0 \neq h \in \Z} \! R(n \abs{h})^{-2\alpha\ell} \right).
\end{align*}
Similar to the proof of \autoref{thm:unshifted} we estimate
\begin{align*}
	\sum_{\bm{0} \neq \bsk\in \Z^{d-\# I_d}} r^{-1}_{\alpha,\bm{\beta}}(n\bsk)
	&\geq \beta_0^{d-\#I_d} \left( \left[ 1 + \frac{2\beta_1 \NR{\alpha}}{\beta_0} \, \frac{1}{n^{2\alpha}} \right]^{d-\#I_d} - 1 \right),
\end{align*}
as well as
\begin{equation*}
	\sum_{0\neq h \in \Z} R(n \abs{h})^{-2\alpha\ell}
	\geq 2 \, n^{-2\alpha\ell} \sum_{m=1}^\infty R(m)^{-2\alpha\ell}
	= 2 \left[ \NR{\alpha\ell}^{1/\ell} \, \frac{1}{n^{2\alpha}} \right]^{\ell}
\end{equation*}
for $\ell=1,\ldots,\#I_d$.
Since $\# \{ \setu \subseteq I_d \sep \#\setu = \ell \} = \binom{\#I_d}{\ell}$ for those $\ell$, it follows
\begin{align*}
	E(Q_n(\bsz))^2 
	\geq \beta_0^{d} \left( \left[ 1 + \frac{2\beta_1 \NR{\alpha}}{\beta_0} \, \frac{1}{n^{2\alpha}} \right]^{d-\#I_d} - 1 \right) \left(1 + 2 \sum_{\ell = 1}^{\#I_d} \left[ \frac{\beta_1\,\NR{\alpha\ell}^{1/\ell}}{\beta_0} \, \frac{1}{n^{2\alpha}} \right]^{\ell} \right).
\end{align*}

The lower bound for the case $I_d=\{1,\ldots,d\}$, i.e., $\# I_d=d$, can be derived similarly but then we need to exclude $\setu=\emptyset$ in order to ensure $\bm{0}\notin\J_d$.
Finally, we use Bernoulli's inequality (see \RefRem{rem:Bernoulli}) and the fact that $\beta_0^d$ equals the squared initial error $e(0,d; \SI_{I_d}(F_d(r_{\alpha,\bm{\beta}})))^2$ to complete the proof.
\end{proof}

In order to show the existence of good shifted lattice rules, we are left with finding generating vectors $\bsz\in\Z_n^d$ such that $E(Q_n(\bsz))$ is upper bounded appropriately. 
In view of \autoref{thm:rmse} the best rate of convergence we can hope for is $n^{-\alpha}$ and the constants will be independent of the dimension~$d$ only if $(d-\# I_d) \in \0(1)$. 
Moreover, it is known that already for $d=1$ this rate cannot be improved. 
We refer to~\cite{SJ94} for details.

To derive the desired existence result we need a lemma which is based on the character property \link{eq:char}. 
For its proof we refer to the appendix (\RefSec{sect:appendix}).
\begin{lemma}\label{lem:char_prop}
	Let $d\in\N$, $\bsh \in \Z^d$, and $n\in\N$ prime. 
	Then
	\begin{align*}
		\frac{1}{\#\Z_n^d} \sum_{\bsz\in\Z_n^d} \ind{\bsh\in L(\bsz)^\bot}
		&= \frac{1}{n} \sum_{j=0}^{n-1} \prod_{\ell=1}^{d} \frac{1}{n} \sum_{z_\ell=0}^{n-1} \exp(\twopii \, j h_\ell z_\ell / n) 
		= \begin{cases}
		    1 & \text{if } \bsh \equiv \bszero \pmod{n}, \\
		   \displaystyle
		    n^{-1}, & \text{otherwise},
		  \end{cases} 
	\end{align*}
	where $L(\bsz)^\bot$ denotes the dual lattice induced by $\bsz$ and $\bsh \equiv \bszero \pmod{n}$ is a shorthand for $h_\ell \equiv 0 \pmod{n}$ for all $1 \le \ell \le d$.
\end{lemma}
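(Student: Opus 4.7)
The plan is to prove both equalities separately, starting from the character property stated in~\link{eq:char}.

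For the first equality, I would apply the character property to write $\ind{\bsh \in L(\bsz)^\bot} = \frac{1}{n}\sum_{j=0}^{n-1} \exp(\twopii (\bsh\cdot\bsz)j/n)$ and then exchange the order of summation between $\bsz\in\Z_n^d$ and $j$. Using that $\bsh\cdot\bsz = \sum_{\ell=1}^d h_\ell z_\ell$, the inner exponential factorizes as a product over the coordinates, which allows the sum over $\bsz=(z_1,\ldots,z_d)\in\Z_n^d$ to split into $d$ independent univariate sums. Distributing the normalization $1/n^d$ across these factors produces exactly the claimed double-sum/product formula. This step is essentially bookkeeping.

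For the second equality I would evaluate the inner univariate sum $\frac{1}{n}\sum_{z_\ell=0}^{n-1} \exp(\twopii j h_\ell z_\ell/n)$ using the geometric-series (orthogonality) identity: it equals $1$ if $j h_\ell \equiv 0 \pmod{n}$ and $0$ otherwise. Here is precisely where the primality of $n$ enters: for any fixed $j\in\{1,\ldots,n-1\}$, the residue $j$ is invertible in $\Z/n\Z$, so $jh_\ell\equiv 0\pmod n$ is equivalent to $h_\ell\equiv 0\pmod n$. Consequently, for each $j\neq 0$, the product over $\ell$ equals $\prod_\ell \ind{h_\ell\equiv 0\pmod n} = \ind{\bsh\equiv\bszero\pmod n}$, while for $j=0$ the product is trivially $1$.

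It remains to collect the $n$ contributions in the outer sum $\frac{1}{n}\sum_{j=0}^{n-1}$. If $\bsh\equiv\bszero\pmod n$, every term equals $1$ and the total is $1$. If $\bsh\not\equiv\bszero\pmod n$, only the $j=0$ term contributes, yielding $1/n$. The main (very mild) obstacle is simply to highlight where primality is used; without it, the step $jh_\ell\equiv 0\Rightarrow h_\ell\equiv 0$ fails and the dichotomy $\{1,1/n\}$ would break down into a more refined case analysis depending on $\gcd(j,n)$.
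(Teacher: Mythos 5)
Your proposal is correct and follows essentially the same route as the paper: expand the indicator via the character property, swap summation orders and factorize over coordinates, then use primality of $n$ to reduce the $j\ne 0$ case to the condition $\bsh\equiv\bszero\pmod n$. The only cosmetic difference is that you argue via invertibility of $j$ (so $jh_\ell\equiv 0\Leftrightarrow h_\ell\equiv 0$), while the paper phrases the same fact as $j\Z_n=\Z_n$ and re-indexes the inner sum; both are equivalent uses of $j$ being a unit modulo a prime.
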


Now we are ready to establish the main result of this paper, the existence of shifted rank-$1$ lattice rules which nearly achieve $\0(n^{-\alpha})$ convergence for numerical integration of $I_d$-permutation-invariant functions. 
To this end we prove that for carefully chosen generating vectors the root mean squared worst case error decays with a rate arbitrarily close to~$\alpha$. 
For explicit component-by-component constructions of such generating vectors we refer to the forthcoming paper \cite{NSW14}.
\begin{theorem}\label{thm:existence}
	Let $d\in\N$, $I_d\subseteq\{1,\ldots,d\}$, and $n\in\N$ with $n\geq c_R$ be prime. 
	Then there exists a generating vector $\bsz^*\in \Z_n^d$ such that the mean squared worst case error of $Q_n(\bsz^*)+\bsDelta$ w.r.t.\ all shifts $\bsDelta\in[0,1)^d$ satisfies
	\begin{align*}
		E(Q_n(\bsz^*))^2
		\leq (1+c_R)^{\lambda}\, C_{d,\lambda}(r_{\alpha,\bm{\beta}}) \, \frac{1}{n^{\lambda}}
		\qquad \text{for all} \qquad 1 \leq \lambda < 2\alpha
	\end{align*}
	with $c_R$ as defined in \autoref{subsect:subspaces} and
	\begin{align}
        \label{C_dlambda}
		C_{d,\lambda}(r_{\alpha,\bm{\beta}})
		= \left( \sum_{\bm{0}\neq \bsh \in \Z^d} \left[ \frac{\M_d(\bsh)!}{\# \S_d} \, r^{-1}_{\alpha,\bm{\beta}}(\bsh) \right]^{1/\lambda} \right)^{\lambda}.
	\end{align}
\end{theorem}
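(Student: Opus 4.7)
My starting point is the representation of the mean squared worst case error obtained in equation \link{eq:mse2} of the proof of \autoref{thm:rmse}, namely
\begin{equation*}
    E(Q_n(\bsz))^2 = \sum_{\bszero \ne \bsh \in L(\bsz)^\bot} a_{\bsh}, \qquad a_{\bsh} := \frac{\M_d(\bsh)!}{\#\S_d}\,r^{-1}_{\alpha,\bm{\beta}}(\bsh),
\end{equation*}
together with the character-property average from \autoref{lem:char_prop}. The idea is the standard ``Jensen trick'' used in component-by-component constructions: since $\lambda \geq 1$, the map $x \mapsto x^{1/\lambda}$ is subadditive on $[0,\infty)$, so
\begin{equation*}
    E(Q_n(\bsz))^{2/\lambda} \leq \sum_{\bszero \ne \bsh \in \Z^d} a_{\bsh}^{1/\lambda}\,\ind{\bsh \in L(\bsz)^\bot}.
\end{equation*}
I will then average this inequality over $\bsz \in \Z_n^d$ (in the spirit of Proposition~\ref{prop:avg_bounds}) and invoke the averaging principle to extract a good generating vector.

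Averaging over $\bsz$ and plugging in \autoref{lem:char_prop} gives
\begin{equation*}
    \frac{1}{\#\Z_n^d}\sum_{\bsz\in\Z_n^d} E(Q_n(\bsz))^{2/\lambda} \leq \sum_{\bszero \ne \bsh \in n\Z^d} a_{\bsh}^{1/\lambda} + \frac{1}{n}\sum_{\bsh\notin n\Z^d} a_{\bsh}^{1/\lambda},
\end{equation*}
and the second sum is trivially bounded by $n^{-1}\,C_{d,\lambda}(r_{\alpha,\bm{\beta}})^{1/\lambda}$ by adjoining back the missing terms. The core of the argument is to show that the first sum is also bounded by $(c_R/n)\,C_{d,\lambda}(r_{\alpha,\bm{\beta}})^{1/\lambda}$. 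Writing $\bsh = n\bsk$ with $\bsk\neq\bszero$, the multinomial factor is unchanged since $\M_d(n\bsk)! = \M_d(\bsk)!$ (scaling by $n\neq 0$ preserves the pattern of coincidences). Using the hypothesis $R(nm) \geq n\,R(m)/c_R$ componentwise on the $|\bsk|_0 \geq 1$ non-zero entries yields
\begin{equation*}
    r^{-1}_{\alpha,\bm{\beta}}(n\bsk) \leq \left(\frac{c_R}{n}\right)^{2\alpha\,|\bsk|_0}\!r^{-1}_{\alpha,\bm{\beta}}(\bsk),
\end{equation*}
and raising to the power $1/\lambda$ gives a prefactor $(c_R/n)^{2\alpha\,|\bsk|_0/\lambda}$. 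Here two uses of the assumption $n \geq c_R$ enter: since $c_R/n \leq 1$ and $|\bsk|_0 \geq 1$, we first collapse $(c_R/n)^{2\alpha|\bsk|_0/\lambda} \leq (c_R/n)^{2\alpha/\lambda}$; then, since $2\alpha/\lambda > 1$ (by $\lambda < 2\alpha$) and $c_R/n \leq 1$, we further get $(c_R/n)^{2\alpha/\lambda} \leq c_R/n$. These two steps are the only non-routine bookkeeping and are the place where the assumption $n \geq c_R$ and the requirement $\lambda < 2\alpha$ are both genuinely used.

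Combining the two partial bounds produces
\begin{equation*}
    \frac{1}{\#\Z_n^d}\sum_{\bsz\in\Z_n^d} E(Q_n(\bsz))^{2/\lambda} \leq \frac{1+c_R}{n}\,C_{d,\lambda}(r_{\alpha,\bm{\beta}})^{1/\lambda},
\end{equation*}
where the convergence of $C_{d,\lambda}(r_{\alpha,\bm{\beta}})$ follows from $R(m) \sim m$ and $2\alpha/\lambda > 1$, so that $\sum_m R(m)^{-2\alpha/\lambda} < \infty$. By the averaging principle there exists $\bsz^* \in \Z_n^d$ whose value is at most the mean, and raising the resulting inequality to the power $\lambda$ delivers the claimed bound
\begin{equation*}
    E(Q_n(\bsz^*))^2 \leq \frac{(1+c_R)^\lambda}{n^\lambda}\,C_{d,\lambda}(r_{\alpha,\bm{\beta}}).
\end{equation*}
I anticipate no serious obstacle beyond the exponent manipulation described above; the primality of $n$ is only needed insofar as it is required by \autoref{lem:char_prop}.
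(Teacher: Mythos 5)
Your proof is correct and follows essentially the same route as the paper's: expand $E(Q_n(\bsz))^2$ via~\link{eq:mse2}, apply Jensen's inequality to the exponent $1/\lambda$, average over $\bsz\in\Z_n^d$ using \RefLem{lem:char_prop}, and split the resulting sum into the $n\Z^d$ part (controlled by the scaling $r^{-1}_{\alpha,\bm{\beta}}(n\bsk) \leq (c_R/n)^{2\alpha}r^{-1}_{\alpha,\bm{\beta}}(\bsk)$ together with $\M_d(n\bsk)!=\M_d(\bsk)!$) and the remainder (controlled by the $n^{-1}$ factor). Your bookkeeping of where $n\geq c_R$ and $\lambda<2\alpha$ enter matches the paper's, and your remark on the finiteness of $C_{d,\lambda}$ (via $2\alpha/\lambda>1$ and $R(m)\sim m$) is a small but correct addition the paper leaves implicit.
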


\begin{proof}
For the optimal choice $\bsz^* \in \Z_n^d$ which minimizes the mean squared worst case error \link{eq:mse} and all $\lambda>0$ we naturally have $E(Q_n(\bsz^*))^{2/\lambda} \leq E(Q_n(\bsz))^{2/\lambda}$
for every $\bsz\in\Z_n^d$, i.e.,
\begin{equation*}
	E(Q_n(\bsz^*))^{2/\lambda}
	\leq \frac{1}{\# \Z_n^d} \sum_{\bsz\in\Z_n^d} 	E(Q_n(\bsz))^{2/\lambda} .
\end{equation*}
We now use \link{eq:mse2} to expand $E(Q_n(\bsz))^2$, $\bsz\in\Z_n^d$, and apply Jensen's inequality (see \RefLem{lem:jensens} in the appendix) for $p=1\geq 1/\lambda = q$ to obtain
\begin{align*}
	E(Q_n(\bsz))^{2/\lambda}
	&= \left( \sum_{\bszero \ne \bsh \in L(\bsz)^{\bot}} \frac{\M_d(\bsh)!}{\# \S_d} \, r^{-1}_{\alpha,\bm{\beta}}(\bsh) \right)^{1/\lambda} 
	\leq \sum_{\bszero \ne \bsh \in L(\bsz)^{\bot}} \left[ \frac{\M_d(\bsh)!}{\# \S_d} \, r^{-1}_{\alpha,\bm{\beta}}(\bsh) \right]^{1/\lambda} 
\end{align*}
for all $\bsz\in\Z_n^d$.
Combining both estimates yields
\begin{align*}
	E(Q_n(\bsz^*))^{2/\lambda}
	&\leq  \frac{1}{\# \Z_n^d} \sum_{\bsz\in\Z_n^d} \sum_{\bszero \ne \bsh \in L(\bsz)^{\bot}} \left[ \frac{\M_d(\bsh)!}{\# \S_d} \, r^{-1}_{\alpha,\bm{\beta}}(\bsh) \right]^{1/\lambda} \\
	&= \sum_{\bszero \neq \bsh \in \Z^d} \left[ \frac{\M_d(\bsh)!}{\# \S_d} \, r^{-1}_{\alpha,\bm{\beta}}(\bsh) \right]^{1/\lambda} \frac{1}{\# \Z_n^d} \sum_{\bsz\in\Z_n^d} \ind{\bsh\in L(\bsz)^\bot} .
\end{align*}
From \RefLem{lem:char_prop} we derive
\begin{align*}
	E(Q_n(\bsz^*))^{2/\lambda} 
	&\leq \sum_{\substack{\bszero \neq \bsh \in \Z^d\\ \bsh \equiv \bszero \tpmod{n}}} \!\left[ \frac{\M_d(\bsh)!}{\# \S_d} \, r^{-1}_{\alpha,\bm{\beta}}(\bsh) \right]^{1/\lambda} + \frac{1}{n} \! \sum_{\substack{\bszero \neq \bsh \in \Z^d\\\exists \ell\colon h_\ell \not\equiv 0 \tpmod{n}}} \!\left[ \frac{\M_d(\bsh)!}{\# \S_d} \, r^{-1}_{\alpha,\bm{\beta}}(\bsh) \right]^{1/\lambda} \\
	&\leq \left(\frac{c_R}{n}\right)^{2\alpha/\lambda} \sum_{\bszero \neq \bsh \in \Z^d} \!\left[ \frac{\M_d(\bsh)!}{\# \S_d} \, r^{-1}_{\alpha,\bm{\beta}}(\bsh) \right]^{1/\lambda} + \frac{1}{n} \sum_{\bszero \neq \bsh \in \Z^d} \!\left[ \frac{\M_d(\bsh)!}{\# \S_d} \, r^{-1}_{\alpha,\bm{\beta}}(\bsh) \right]^{1/\lambda} \\
	&\leq \frac{1+c_R}{n} \sum_{\bszero \neq \bsh \in \Z^d} \!\left[ \frac{\M_d(\bsh)!}{\# \S_d} \, r^{-1}_{\alpha,\bm{\beta}}(\bsh) \right]^{1/\lambda},
\end{align*}
where we used that for all $\bsh=n\bsk\in n\Z^d\setminus\{\bm{0}\}$ it is
\begin{equation*}
	\M_d(n\bsk)! = \M_d(\bsk)! 
	\quad \text{and} \quad
	r^{-1}_{\alpha,\bm{\beta}}(n\bsk)
	\leq \left( \frac{c_R}{n} \right)^{2\alpha\abs{\bsk}_0} \, r^{-1}_{\alpha,\bm{\beta}}(\bsk)
	\leq \left( \frac{c_R}{n} \right)^{2\alpha} \, r^{-1}_{\alpha,\bm{\beta}}(\bsk),
\end{equation*}
since we assumed that $\lambda < 2\alpha$ as well as $n \geq c_R$.
\end{proof}

As already stated in the introduction, not only the rate of convergence but also the dependence of the error bounds on the dimension $d$ plays an important role in modern research and computational practice. As we will see in \RefProp{prop:constant} below, for fixed $\bm{\beta}=(\beta_0,\beta_1)$, the constant $C_{d,\lambda}(r_{\alpha,\bm{\beta}})$ in the estimate stated in \autoref{thm:existence} can be bounded polynomially in $d$ only if we restrict ourselves to the case $\lambda=1$ which corresponds to the Monte Carlo rate of convergence $n^{-1/2}$. Furthermore, even in this case we need to assume reasonably small parameters $\beta_1$, as well as enough permutation-invariance conditions. In detail, we need
\begin{equation*}
	(d-\#I_d)\in \0(\ln d)
	\qquad \text{and} \qquad
	\frac{\beta_1}{\beta_0 R(m)^{2\alpha}} < 1 \quad \text{for all} \quad m\in\N
\end{equation*}
in order to avoid an exponential growth with the dimension.
The proofs of the following assertions are postponed to the appendix (\RefSec{sect:appendix}).

\begin{proposition}\label{prop:constant}
For $d\in\N$, $I_d\subseteq\{1,\ldots,d\}$, $r_{\alpha,\bm{\beta}}$ as in \RefSec{sect:setting}, and $\lambda \geq 1$ consider the constant $C_{d,\lambda}(r_{\alpha,\bm{\beta}})$ defined by \link{C_dlambda}. Then
\begin{itemize}
	\item[$\bullet$] $C_{d,\lambda}(r_{\alpha,\bm{\beta}})$ is a monotonically increasing, continuous function of $\lambda$, i.e.,
		\begin{equation*}
			C_{d,\lambda}(r_{\alpha,\bm{\beta}}) 
			\leq C_{d,\mu}(r_{\alpha,\bm{\beta}})
			\qquad \text{for all} \qquad 1 \leq \lambda \leq \mu.
		\end{equation*}
	Moreover, for all $\lambda\geq 1$ this constant scales with the squared initial error. That is,
		\begin{equation}\label{eq:scaling}
			C_{d,\lambda}(r_{\alpha,\bm{\beta}}) 
			= e(0,d; \SI_{I_d}(F_d(r_{\alpha,\bm{\beta}})))^2 \, C_{d,\lambda}(r_{\alpha,(1,\beta_1/\beta_0)}).
		\end{equation}
	\item[$\bullet$] For $\lambda=1$ and all $m\in\N$ we have
		\begin{align}
			C_{d,1}(r_{\alpha,\bm{\beta}}) 
			&= e(0,d; \SI_{I_d}(F_d(r_{\alpha,\bm{\beta}})))^2 \, \left( \frac{M_{2,d}(K_{d,I_d})}{S_d(K_{d,I_d})} - 1 \right) \label{eq:repres}\\
			&\geq e(0,d; \SI_{I_d}(F_d(r_{\alpha,\bm{\beta}})))^2 \, \left( 2\, \left[ \frac{\beta_1}{\beta_0 \, R(m)^{2\alpha}} \right]^d -1 \right),\nonumber
		\end{align}
		where $M_{2,d}(K_{d,I_d})$ and $S_d(K_{d,I_d})$ are given by \RefLem{lem:SdMd} and $K_{d,I_d}$ denotes the reproducing kernel of $\SI_{I_d}(F_d(r_{\alpha,\bm{\beta}}))$.
	\item[$\bullet$] The constant $C_{d,\lambda}(r_{\alpha,\bm{\beta}})$ is lower bounded as follows: 
	In the fully permutation-invariant case ($\# I_d=d$) it holds
		\begin{align*}
			&C_{d,\lambda}(r_{\alpha,\bm{\beta}}) \geq e(0,d; \SI_{I_d}(F_d(r_{\alpha,\bm{\beta}})))^2 
			\cdot \begin{cases}
				2\, \sum_{\ell=1}^d\limits \left( \frac{\beta_1}{\beta_0 R(1)^{2\alpha}}\right)^\ell &  \text{ if} \quad  \lambda = 1, \\
				2^\lambda \left[ \left( 1 + \left[ \frac{\beta_1}{\beta_0\, R(1)^{2\alpha}} \right]^{1/(\lambda-1)} \right)^{d} -1 \right]^{\lambda-1} & \text{ if} \quad \lambda > 1,
			\end{cases}
		\end{align*}
		whereas in the case $\# I_d < d$ we have
		\begin{align}
			C_{d,\lambda}(r_{\alpha,\bm{\beta}}) 
			& \geq e(0,d; \SI_{I_d}(F_d(r_{\alpha,\bm{\beta}})))^2 \left[ \left( 1+ 2 \left[ \frac{\beta_1}{\beta_0}\right]^{1/\lambda} \NR{\alpha/\lambda}\right)^{d-\#I_d}-1 \right]^\lambda \label{est:lower_bound}\\
			& \qquad \quad \times
			\begin{cases}
				\left( 1 + 2\, \sum_{\ell=1}^{\#I_d} \limits \left[ \frac{\beta_1}{\beta_0 R(1)^{2\alpha}}\right]^\ell\right) &  \text{ if} \quad  \lambda = 1, \\[4mm]
				\left( 1+ 2^\lambda \left[ \left( 1+ \left[ \frac{\beta_1}{\beta_0\, R(1)^{2\alpha}}\right]^{1/(\lambda-1)} \right)^{\#I_d} -1 \right]^{\lambda-1}\right) & \text{ if} \quad \lambda > 1.
			\end{cases} \nonumber
		\end{align}
		(If we do not have any permutation-invariance, i.e., if $I_d=\emptyset$, then the lower bound reduces to the first line \link{est:lower_bound} with $d-\# I_d$ replaced by $d$.)
		\item[$\bullet$] Finally, if $1<\lambda < 2\alpha$ and $A>0$ is chosen such that $\alpha > A+1/2 > \lambda/2$,
	then for all $\gamma > 0$ there holds the upper bound
	\begin{align}\label{est:upper_bound}
		C_{d,\lambda}(r_{\alpha,(\beta_0,\beta_1)}) 
		\leq C_{d,1}(r_{\alpha-A,(\beta_0, \beta_1 \gamma)})  \left( \left[ 1 + 2 \, \gamma^{-1/(\lambda-1)} \, \NR{A/(\lambda-1)} \right]^d - 1 \right)^{\lambda-1} < \infty.
	\end{align}	
\end{itemize}
\end{proposition}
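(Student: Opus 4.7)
My plan is to treat the five sub-claims in order, as later parts build on earlier ones only at a high level. First, the scaling identity \link{eq:scaling} follows immediately from the coordinate-wise factorisation $r^{-1}_{\alpha,(\beta_0,\beta_1)}(\bsh) = \beta_0^d\, r^{-1}_{\alpha,(1,\beta_1/\beta_0)}(\bsh)$ together with $S_d = \beta_0^d$ from \RefLem{lem:SdMd}. For the $\lambda=1$ representation \link{eq:repres} I would apply the permutation identity \link{eq:transform} to $G(\bsh) := (\M_d(\bsh)!/\#\S_d)\, r^{-1}_{\alpha,\bm{\beta}}(\bsh)$: permutation-invariance of both factors collapses $\sum_{P\in\S_d} G(P(\bsk))$ to $\M_d(\bsk)!\, r^{-1}_{\alpha,\bm{\beta}}(\bsk)$, so the sum over $\Z^d$ telescopes to $\sum_{\bsk\in\nabla_d} r^{-1}_{\alpha,\bm{\beta}}(\bsk) = M_{2,d}(K_{d,I_d})$, and subtracting the $\bsh = \bszero$ term ($= S_d$) yields \link{eq:repres}. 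The accompanying lower bound then comes from retaining only $\bsk = \pm m\bsone$ in the $M_{2,d}$-sum (both always lie in $\nabla_d$).

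For monotonicity and continuity in $\lambda$, I would observe that $C_{d,\lambda}$ equals the counting-measure $\ell^{1/\lambda}$-quasi-norm of the nonnegative sequence $a_{\bsh} := (\M_d(\bsh)!/\#\S_d)\, r^{-1}_{\alpha,\bm{\beta}}(\bsh)$, since $C_{d,\lambda}^{1/\lambda} = \sum_{\bsh \neq \bszero} a_{\bsh}^{1/\lambda}$. Standard $\ell^p$-theory ($\|a\|_q \leq \|a\|_p$ on counting measure for $0 < p \leq q$, with continuity in $p$ wherever finite) then transfers both properties directly to $\lambda \mapsto C_{d,\lambda} = \|a\|_{1/\lambda}$.

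The lower bounds for general $\lambda$ rely on restricting the sum defining $C_{d,\lambda}^{1/\lambda}$ to concrete families. For $\#I_d = d$ I would keep those $\bsh$ whose nonzero components all equal a common sign $\pm 1$: for such a vector with $\ell$ nonzeros the stabiliser has size $\ell!(d-\ell)!$, so $\M_d(\bsh)!/\#\S_d = 1/\binom{d}{\ell}$, and there are $2\binom{d}{\ell}$ of them, yielding
\begin{equation*}
		C_{d,\lambda}^{1/\lambda} \geq 2\,\beta_0^{d/\lambda} \sum_{\ell=1}^{d} \binom{d}{\ell}^{(\lambda-1)/\lambda} X^{\ell/\lambda}, \qquad X := \frac{\beta_1}{\beta_0\,R(1)^{2\alpha}}.
\end{equation*}
For $\lambda=1$ the binomials trivialise. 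For $\lambda>1$ I would invoke the elementary subadditivity $(u+v)^r \leq u^r+v^r$ on $r=(\lambda-1)/\lambda\in(0,1)$ applied iteratively to $(1+X^{1/(\lambda-1)})^d - 1 = \sum_{\ell=1}^d \binom{d}{\ell} X^{\ell/(\lambda-1)}$, giving $\sum_\ell \binom{d}{\ell}^{(\lambda-1)/\lambda} X^{\ell/\lambda} \geq [(1+X^{1/(\lambda-1)})^d - 1]^{(\lambda-1)/\lambda}$; raising to $\lambda$ produces the stated lower bound. For $\#I_d < d$ I would split $\bsh = (\bsh',\bsh'')$ along $I_d^c$ and $I_d$: since $\M_d(\bsh)!/\#\S_d$ depends only on $\bsh''$ and $r^{-1}$ tensorises, summing the $\bsh''=\bszero$, $\bsh'\neq\bszero$ contribution ($=\beta_0^{d/\lambda}\,T$ with $T := (1+2(\beta_1/\beta_0)^{1/\lambda}\NR{\alpha/\lambda})^{d-\#I_d} - 1$) together with the $\bsh''\neq\bszero$ contribution (at least $\beta_0^{d/\lambda}(1+T)W$, where $W$ is the $\#I_d$-dimensional lower bound just obtained) yields $C_{d,\lambda}^{1/\lambda} \geq \beta_0^{d/\lambda}(T+W+TW)$. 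Finally, the convexity inequality $(x+y)^\lambda \geq x^\lambda + y^\lambda$ (valid for $x,y\geq 0$, $\lambda\geq 1$) applied twice gives $(T+W+TW)^\lambda = (T(1+W)+W)^\lambda \geq T^\lambda(1+W)^\lambda + W^\lambda \geq T^\lambda(1+W^\lambda)$, matching the claimed product form.

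The upper bound \link{est:upper_bound} is the step I expect to be most delicate. The key trick is the coordinatewise factorisation $r^{-1}_{\alpha,(\beta_0,\beta_1)}(\bsh) = r^{-1}_{\alpha-A,(\beta_0,\beta_1\gamma)}(\bsh)\cdot G(\bsh)$ with $G(\bsh) := \gamma^{-|\bsh|_0}\prod_{h_\ell \neq 0} R(|h_\ell|)^{-2A}$, obtained from $\beta_1 = (\beta_1\gamma)\gamma^{-1}$ and $R^{-2\alpha} = R^{-2(\alpha-A)}\, R^{-2A}$. H\"older's inequality with exponents $p = \lambda$ and $q = \lambda/(\lambda-1)$ applied to $C_{d,\lambda}^{1/\lambda}$, with the split $[(\M_d(\bsh)!/\#\S_d)\, r^{-1}_{\alpha-A,(\beta_0,\beta_1\gamma)}(\bsh)]^{1/p} \cdot [G(\bsh)^{1/(\lambda-1)}]^{1/q}$, leads after raising to $\lambda$ to $C_{d,\lambda} \leq C_{d,1}(r_{\alpha-A,(\beta_0,\beta_1\gamma)}) \cdot \bigl(\sum_{\bsh \neq \bszero} G(\bsh)^{1/(\lambda-1)}\bigr)^{\lambda-1}$; the second factor tensorises to $((1+2\gamma^{-1/(\lambda-1)}\NR{A/(\lambda-1)})^d - 1)^{\lambda-1}$. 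The main obstacle is aligning the two hypotheses $\alpha - A > 1/2$ (which keeps $C_{d,1}(r_{\alpha-A,(\beta_0,\beta_1\gamma)})$ finite via \link{eq:repres} and \RefLem{lem:SdMd}) and $A > (\lambda-1)/2$ (which is exactly what \link{summableR} requires for $\NR{A/(\lambda-1)} < \infty$); both drop out cleanly from the stated hypothesis $\alpha > A + 1/2 > \lambda/2$.
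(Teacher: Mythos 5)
Your proposal is correct and takes essentially the same approach as the paper: scaling by factoring out $\beta_0^d$, \eqref{eq:transform} for the $\lambda=1$ identity, Jensen/$\ell^p$-monotonicity for the first bullet, restriction to sparse integer vectors with binomial counting for the lower bounds, and the H\"older split $r^{-1}_{\alpha,(\beta_0,\beta_1)} = r^{-1}_{\alpha-A,(\beta_0,\beta_1\gamma)}\cdot r^{-1}_{A,(1,1/\gamma)}$ for the upper bound. The only minor divergences from the paper's proof are cosmetic: you restrict immediately to vectors with all nonzero entries equal to $\pm 1$ (the paper sums over a common multiple $h$ first, obtaining $\NR{\alpha\ell/\lambda}$, and only then bounds below by $R(1)^{-2\alpha\ell/\lambda}$), and for $\#I_d<d$ your partition by $\bsh''=\bszero$ versus $\bsh''\neq\bszero$ yields the slightly stronger intermediate bound $\beta_0^{d}(T+W+TW)^\lambda$ in place of the paper's $\beta_0^{d}T^\lambda(1+W)^\lambda$ — both reduce via superadditivity of $t\mapsto t^\lambda$ to the stated product form.
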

\begin{remark}
If we allow weight parameters $\beta_1$ which decay with the dimension $d$ then~\link{est:upper_bound} can be used to bound $C_{d,\lambda}(r_{\alpha,(\beta_0,\beta_1)})$ polynomially in $d$ also for $\lambda>1$.
To this end, let us record that
\begin{align*}
	\left[ 1 + 2 \, \gamma^{-1/(\lambda-1)} \, \NR{A/(\lambda-1)} \right]^d
	&\leq \exp\left(2 \, \NR{A/(\lambda-1)} \frac{d}{\gamma^{1/(\lambda-1)}} \right)
\end{align*}
is polynomially upper bounded in $d$ if $\gamma=\gamma(d)$ is chosen such that
\begin{equation*}
	\gamma \geq C\,\left( \frac{d}{\ln (d+1)} \right)^{\lambda-1}
\end{equation*}
for some constant $C>0$ and all $d\in\N$.
In order to bound the first factor in \link{est:lower_bound} we follow the lines of  \autoref{sect:avgbounds}. Thus, it is sufficient to ensure that for all $m\in\N$
\begin{equation*}
	\frac{2\, \beta_1\gamma}{\beta_0 R(m)^{2(\alpha-A)}} \leq 1
	\qquad \text{and} \qquad
	\beta_1 \gamma \leq C \frac{\ln (d+1)}{\max\{d-\#I_d,1\}},
\end{equation*}
see \RefRem{rem:decaying_weights}. 
Choosing $\gamma$ as above then leads to the condition
\begin{equation*}
	\beta_1=\beta_1(d) 
	\leq \frac{\left[ \ln (d+1) \right]^{\lambda}}{d^{\lambda-1}} \frac{c}{\max\{\ln(d+1),d-\#I_d\} }
\end{equation*}
which generalizes the condition for $\lambda=1$.
\end{remark}

Recall that for fixed parameters $r_{\alpha,\bm{\beta}}$ and fixed dimension $d$ the blowup of the constant $C_{d,\lambda}(r_{\alpha,\bm{\beta}})$ for $\lambda>1$ is quite typical. 
Therefore the case $\lambda=1$ deserves special attention.
We summarize the final assertion for this case in the next corollary.

\begin{corollary}\label{cor:existence}
	Let $d\in\N$ and $I_d\subseteq\{1,\ldots,d\}$. 
	Then for all $n\in\N$ prime with $n\geq c_R$ there exists a shifted rank-$1$ lattice rule $Q_n(\bsz^*)+\bsDelta^*$ for integration of $I_d$-permutation-invariant functions in $F_d(r_{\alpha,\bm{\beta}})$ such that
	\begin{align}\label{opt_onehalf_bound}
		&e^{\mathrm{wor}}(Q_{n}(\bsz^*)+\bsDelta^*; \SI_{I_d}(F_d(r_{\alpha,\bm{\beta}}))) \nonumber\\
		&\qquad\qquad\leq \sqrt{1+c_R} \, \sqrt{\frac{M_{2,d}(K_{d,I_d})}{S_d(K_{d,I_d})} - 1} \; n^{-1/2} \; e(0,d; \SI_{I_d}(F_d(r_{\alpha,\bm{\beta}}))).
	\end{align}
	Therefore, up to some small constant, it realizes the bounds stated in \RefProp{prop:avg_bounds} and \RefThm{thm:tractability}, respectively. Consequently, our tractability results can be achieved using shifted rank-$1$ lattice rules.
\end{corollary}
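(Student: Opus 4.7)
The proof will be a direct synthesis of the three preceding results: \RefThm{thm:existence}, \RefProp{prop:constant}, and \RefThm{thm:rmse}. The basic plan is first to produce a good generating vector via the existence theorem, then to replace the averaged constant by the explicit expression in terms of $M_{2,d}/S_d$, and finally to invoke the mean-value argument that converts a bound on the root mean squared error $E(Q_n(\bsz^*))$ into a bound for a fixed shift $\bsDelta^*$.

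More concretely, I would proceed as follows. The standing assumption $\alpha > 1/2$ (inherited from~\link{summableR}) guarantees that the choice $\lambda = 1$ is admissible in \RefThm{thm:existence}, \ie $1 \leq \lambda < 2\alpha$. Applying that theorem with $\lambda = 1$ yields a generating vector $\bsz^*\in\Z_n^d$ with
\begin{equation*}
    E(Q_n(\bsz^*))^2
    \leq (1+c_R) \, C_{d,1}(r_{\alpha,\bm{\beta}}) \, \frac{1}{n}.
\end{equation*}
Next, I would substitute the closed-form expression~\link{eq:repres} from \RefProp{prop:constant} for $C_{d,1}(r_{\alpha,\bm{\beta}})$, giving
\begin{equation*}
    E(Q_n(\bsz^*))^2
    \leq (1+c_R) \, e(0,d; \SI_{I_d}(F_d(r_{\alpha,\bm{\beta}})))^2
        \left(\frac{M_{2,d}(K_{d,I_d})}{S_d(K_{d,I_d})} - 1\right) \frac{1}{n}.
\end{equation*}

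To pass from root mean squared error to a concrete realized error, I would apply the first bullet of \RefThm{thm:rmse} to this particular $\bsz^*$. By the mean-value property used there, there exists a shift $\bsDelta^* = \bsDelta^*(\bsz^*) \in [0,1)^d$ such that
\begin{equation*}
    e^{\mathrm{wor}}(Q_n(\bsz^*) + \bsDelta^*; \SI_{I_d}(F_d(r_{\alpha,\bm{\beta}})))
    \leq E(Q_n(\bsz^*)).
\end{equation*}
Taking square roots in the previous display and combining with this inequality yields exactly~\link{opt_onehalf_bound}.

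Since every ingredient is already established, I do not expect any serious obstacle; the only thing to check is the admissibility condition $1 \leq \lambda < 2\alpha$ for the use of \RefThm{thm:existence} and the compatibility of $n$ prime with $n \geq c_R$, both of which are part of the hypotheses. The final sentence of the corollary, asserting that this reproduces the generic upper bound of \RefThm{thm:tractability} up to the factor $\sqrt{1+c_R}$, is immediate by comparing with \RefProp{prop:avg_bounds} once one recognizes $\sqrt{M_{2,d}/S_d - 1}$ as the quantity driving that bound.
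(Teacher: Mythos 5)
Your proposal is correct and matches the paper's proof essentially step for step: apply Theorem~\ref{thm:existence} with $\lambda=1$, invoke the mean-value property (as in Theorem~\ref{thm:rmse}) to pass from $E(Q_n(\bsz^*))$ to a specific shift $\bsDelta^*$, and substitute the identity~\eqref{eq:repres} from Proposition~\ref{prop:constant} for $C_{d,1}(r_{\alpha,\bm{\beta}})$. The only minor addition on your part is the explicit remark that $\lambda=1$ is admissible because $\alpha>1/2$, which is a reasonable sanity check the paper leaves implicit.
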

\begin{proof}
Due to \autoref{thm:existence} (for $\lambda=1$) there exists a generating vector $\bsz^*\in\Z_n^d$ such that
\begin{equation*}
	E(Q_n(\bsz^*)) 
	\leq \sqrt{1+c_R} \sqrt{C_{d,1}(r_{\alpha,\bm{\beta}})}\, n^{-1/2}.
\end{equation*}
Moreover, the mean value property implies the existence of some $\bsDelta^*=\bsDelta^*(\bsz^*)\in[0,1)^d$ with
\begin{equation*}
	e^{\mathrm{wor}}(Q_{n}(\bsz^*)+\bsDelta^*; \SI_{I_d}(F_d(r_{\alpha,\bm{\beta}}))) 
	\leq E(Q_n(\bsz^*)).
\end{equation*}
Consequently, \link{eq:repres} in \RefProp{prop:constant} yields the claim.
\end{proof}

\begin{remark}\label{rem:constant}
Note that more elaborate estimates in the proof of \autoref{thm:existence} allow to reduce the constant $1+c_R$ to $1+\delta$ with arbitrarily small $\delta>0$ when we assume that $n$ is larger than some constant only depending on $\alpha$, $\lambda$, $c_R$, and $\delta$. This clearly effects the bound \link{opt_onehalf_bound} in \RefCol{cor:existence} as well.
\end{remark}


\section{Appendix}\label{sect:appendix}
In this final section we collect the proofs of all lemmas and propositions we postponed in the course of this paper.

\subsection*{Proof of \RefLem{lem:SdMd}}
\begin{proof}
Due to the definition of $M_{1,d}$, $M_{2,d}$, and $S_d$ 
we can restrict ourselves to the study of the extremal cases of the
fully permutation-invariant spaces $\SI(F_d(r_{\alpha,\bm{\beta}}))$ 
and the spaces $F_d(r_{\alpha,\bm{\beta}})$ without any permutation-invariance
since \link{tensor_kernel} implies
\begin{gather*}
		X_d(K_{d,I_d}) = X_{\#I_d}(K_{\#I_d,\{1,\ldots,\#I_d\}}) \; X_{d-\#I_d}(K_{d-\#I_d})
\end{gather*}
for $X \in\{S, M_{1}, M_{2}\}$.

For the fully permutation-invariant spaces $H_d = \SI(F_d(r_{\alpha,\bm{\beta}}))$
induced by $K=K_{d,\{1,\ldots,d\}}$, as well as for the entire spaces $H_d=F_d(r_{\alpha,\bm{\beta}})$, where $K=K_d$, the initial error for integration clearly equals $\beta_0^{d/2}$. This proves $S_d(K_{d,I_d})=\beta_0^d$.

We turn to the derivation of $M_{2,d}$. 
If $H_d=F_d(r_{\alpha,\bm{\beta}})$ then
\begin{align*}
		M_{2,d}(K_d) 
		= \left( \int_0^1 K_1(x,x) \, \rd x \right)^d 
		= \left( \sum_{h\in\Z} r^{-1}_{\alpha,\bm{\beta}}(h) \right)^d 
		= \sum_{\bsh\in\Z^d} r^{-1}_{\alpha,\bm{\beta}}(\bsh)
\end{align*}
due to the (tensor) product structure of the objects involved.
Using the definition of $r_{\alpha,\bm{\beta}}$ for $d=1$ we see that
\begin{align*}
		\sum_{h\in\Z} r^{-1}_{\alpha,\bm{\beta}}(h)
		= \sum_{h\in\Z} \left(\delta_{0,h} \beta_0 + (1-\delta_{0,h}) \beta_1 R(\abs{h})^{-2\alpha}\right)
		&= \beta_0 + 2 \beta_1 \sum_{m\in\N} R(m)^{-2\alpha} 
		= \beta_0 + 2 \beta_1 \NR{\alpha},
\end{align*}
where $\NR{\alpha}$ is given by \link{summableR}.
Thus, we have shown that 
\begin{equation*}
	M_{2,d}(K_d)
	= \beta_0^d \, \left( 1 + \frac{2 \beta_1 \NR{\alpha}}{\beta_0} \right)^d.
\end{equation*}
Since $K_1(x,x)$ is constant with respect to $x\in[0,1]$ we see that $M_{1,d}(K_d)= M_{2,d}(K_d)$ which finally implies \link{eq:M1}.

For the fully permutation-invariant case we need a little more effort. We restrict ourselves to $M_{2,d}$.
In this case,
\begin{align*}
 		M_{2,d}(K_{d,\{1,\ldots,d\}}) 
    &= \sum_{\bsk \in \nabla_d} \frac{r^{-1}_{\alpha,\bm{\beta}}(\bsk)}{\M_d(\bsk)!} \sum_{P \in \S_d} \int_{[0,1]^d} \exp(\twopii (\bsk-P(\bsk)) \cdot \bsx) \rd\bsx 
    .
\end{align*}
It is clear that the integral is~$1$ whenever $\bsk = P(\bsk)$, which happens exactly $\M_d(\bsk)!$ times out of all $P \in \S_d$, and $0$~otherwise.
Thus, we get
\begin{align*}
	M_{2,d}(K_{d,\{1,\ldots,d\}})
    = \sum_{\bsk \in \nabla_d} r^{-1}_{\alpha,\bm{\beta}}(\bsk). 
\end{align*}
Now the (tensor) product structure of the set $\nabla_d$, see \link{def_nabla}, the weights $r_{\alpha,\bm{\beta}}$, the kernel $K_{d,I_d}$, see \link{tensor_kernel}, and the quantity $M_{2,d}$, implies that the latter expression remains valid for $K=K_{d,I_d}$ with arbitrary subsets $\emptyset \neq I_d\subseteq \{1,\ldots,d\}$.
Finally, note that for $\#I_d<2$ we have $\nabla_d=\Z^d$ which completes the proof.
\end{proof}

\subsection*{Proof of \RefLem{lemmaBound}}
\begin{proof}
We first note the following equality
\begin{gather}\label{case_mNEW}
				\sum_{\substack{\bm{k}\in\N_0^s \\m\leq k_1\leq\cdots\leq k_s}} \lambda_{s,\bm{k}}
				= \lambda_m^s + \sum_{\ell=1}^s \lambda_m^{s-\ell}\sum_{ \substack{ \bm{j}\in\N^\ell\\m+1\leq j_1\leq\cdots\leq j_\ell } } \lambda_{\ell,\bm{j}}
				\qquad \text{for all} \quad s\in\N,
\end{gather}
which follows by considering $\ell$ of the $k_j$'s to be larger than $m$ and by the product structure of $\lambda_{s,\bsk}$.
We now prove \link{estimate_VNEW} via induction on $V\in\N_0$.
Therefore, let $d\in\N$ be fixed arbitrarily.
Setting $s=d$ and $m=0$ in\RefEq{case_mNEW} corresponds to\RefEq{estimate_VNEW} with $V=0$.
Thus, assume \link{estimate_VNEW} to be true for some fixed $V\in\N_0$.
Then, by using \link{case_mNEW} for $s=L$ and $m=V+1$, we see that the right hand side of \link{estimate_VNEW} equals 
\begin{align*}
  & \lambda_0^d \, d^V \Bigg( 1 + V + \sum_{L=1}^d \lambda_0^{-L} \Bigg( \lambda_{V+1}^L + \sum_{\ell=1}^L \lambda_{V+1}^{L-\ell} \sum_{ \substack{ \bm{j}\in\N^\ell \\V+2\leq j_1\leq\cdots\leq j_\ell } } \lambda_{\ell,\bm{j}} \Bigg) \Bigg) \\
  &\qquad\qquad = \lambda_0^d \, d^V \Bigg( 1 + V + \sum_{L=1}^d \left(\frac{\lambda_{V+1}}{\lambda_0}\right)^{L} 
  + \sum_{L=1}^d \sum_{\ell=1}^L \left(\frac{\lambda_{V+1}}{\lambda_0}\right)^{L-\ell} \lambda_0^{-\ell}  \sum_{ \substack{ \bm{j}\in\N^\ell\\(V+1)+1\leq j_1\leq\cdots\leq j_\ell } } \lambda_{\ell,\bm{j}} \Bigg). 
\end{align*}
We now decouple the double sum by letting $\ell$ go up to $d$.
The sums on $L$ can then be bounded by $d$ as (by assumption) we have $\lambda_{V+1}/\lambda_0 \le 1$.
Now also bounding $1+V \le d\,(1+V)$ we obtain
\begin{align*}
					\sum_{\substack{\bm{k}\in\N_0^d\\0\leq k_1\leq\cdots\leq k_d}} \lambda_{d,\bm{k}}
					&\leq \lambda_0^{d} \, d^{V+1} \left( 1 + (V + 1) + \sum_{\ell=1}^d \lambda_0^{-\ell} \sum_{ \substack{ \bm{j}\in\N^\ell\\(V+1)+1\leq j_1\leq\cdots\leq j_\ell } } \lambda_{\ell,\bm{j}} \right)
\end{align*}
which completes the induction step.
\end{proof}

\subsection*{Proof of \RefLem{lem:e2}}
\begin{proof}
Using\RefEq{sym_kernel_2} we obtain
\begin{align}
	\int_{[0,1]^d} K_{d,I_d}(\bsx, \bsy) \rd\bsx 
	&= \sum_{P \in \S_d} \sum_{\bsh \in \Z^d} \frac{r^{-1}_{\alpha,\bm{\beta}}(\bsh)}{\#\S_d} \exp\left(-\twopii\bsh \cdot \bsy \right) \int_{[0,1]^d} \exp \left(\twopii \bsh \cdot P(\bsx) \right) \rd\bsx \nonumber\\
	&= r^{-1}_{\alpha,\bm{\beta}}(\bm 0) \label{eq:vanish_intK}
\end{align}
independent of $\bsy\in[0,1]^d$ since the last integral equals one for $\bsh =\bm{0}$ and zero otherwise.
Therefore
\begin{align*}
  \int_{[0,1]^d} \int_{[0,1]^d} K_{d,I_d}(\bsx, \bsy) \rd\bsx \rd{\bsy}
  &=
  r^{-1}_{\alpha,\bm{\beta}}(\bm 0)
\end{align*}
and
\begin{align*}
  - \frac2n \sum_{j=0}^{n-1} w_j \int_{[0,1]^d} K_{d,I_d}(\bsx, \bst^{(j)}) \rd\bsx 
  &= - \, r^{-1}_{\alpha,\bm{\beta}}(\bm 0) \, \frac2n \sum_{j=0}^{n-1} w_j.
\end{align*}
The remaining term in~\eqref{eq:e2-K} is the double cubature sum for which we obtain
\begin{align*}
	\frac1{n^2} \sum_{j,\ell=0}^{n-1} w_j\, w_\ell \, K_{d,I_d}(\bst^{(j)}, \bst^{(\ell)}) 
	 &= \sum_{\bsh \in \Z^d} r^{-1}_{\alpha,\bm{\beta}}(\bsh) \left( \frac1{n} \sum_{\ell=0}^{n-1} w_\ell\,  \exp(-\twopii \bsh\cdot \bst^{(\ell)})\right) \\
	&\qquad\qquad\qquad \times \left( \frac{1}{\#\S_d} \sum_{P \in \S_d} \frac1{n} \sum_{j=0}^{n-1} w_j \,\exp(\twopii \bsh\cdot P(\bst^{(j)})) \right) 
\end{align*}
which directly follows from\RefEq{sym_kernel_2}.
Summing up the three contributions and replacing $P$ by $P^{-1}$ now proves the claim.
\end{proof}

\subsection*{Proof of \RefLem{lem:char_prop}}
\begin{proof}
The first equality in the statement of \RefLem{lem:char_prop} follows from the character property~\link{eq:char} and $\Z_n=\{0,1,\ldots,n-1\}$ since
\begin{align*}
	\frac{1}{\#\Z_n^d} \sum_{\bsz\in\Z_n^d} \ind{\bsh\in L(\bsz)^\bot}
	&= \frac{1}{n^d} \sum_{z_1\in\Z_n}\cdots\sum_{z_d\in\Z_n} \frac1n \sum_{j=0}^{n-1} \exp\!\left(\twopii \, \frac{j}{n} \, \sum_{\ell=1}^d h_\ell z_\ell\right)
	\\
	&=
	\frac{1}{n} \sum_{j=0}^{n-1} \prod_{\ell=1}^{d} \frac{1}{n} \sum_{z_\ell=0}^{n-1} \exp(\twopii \, h_\ell (j z_\ell) / n) 
	.
\end{align*}
For $j=0$ we have
\begin{align*}
  \prod_{\ell=1}^{d} \frac{1}{n} \sum_{z_\ell=0}^{n-1} \exp(0)
  &=
  1,
\end{align*}
while for $j\ne 0$ and $n$ prime we have that $j \Z_n = \Z_n$ and thus for each $0 < j < n$ it holds
\begin{align*}
  \prod_{\ell=1}^d \frac{1}{n} \sum_{z_\ell=0}^{n-1} \exp(\twopii \, h_\ell z_\ell / n) 
  &=
  \prod_{\ell=1}^d \begin{cases} 
    1, & \text{if $h_j \equiv 0 \pmod{n}$}, \\
    0, & \text{otherwise}
  \end{cases}
  \\
  &=
  \begin{cases}
    1, & \text{if all $h_j \equiv 0 \pmod{n}$}, \\
    0, & \text{otherwise}.
  \end{cases}
\end{align*}
This proves the claim as $(1+(n-1))/n = 1$ and $(1 + 0)/n = n^{-1}$.
\end{proof}

\subsection*{Proof of \RefProp{prop:constant}}
For the reader's convenience let us first recall a standard estimate which is sometimes referred to as \emph{Jensen's inequality}.
\begin{lemma}\label{lem:jensens}
Let $(a_j)_{j\in\N}$ denote an arbitrary sequence of non-negative real numbers. Then, for every $0<q\leq p< \infty$, it holds
\begin{equation*}
	\left( \sum_{j=1}^\infty a_j^p \right)^{1/p}
	\leq \left( \sum_{j=1}^\infty a_j^q \right)^{1/q}
\end{equation*}
whenever the right-hand side is finite.
\end{lemma}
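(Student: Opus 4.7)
The plan is to reduce the inequality to the elementary fact that $t^{r} \leq t$ whenever $t \in [0,1]$ and $r \geq 1$, which is precisely the regime we end up in after a suitable normalization. Set $r = p/q$, so by assumption $r \geq 1$. If $\sum_j a_j^q = 0$ then every $a_j$ is zero and both sides vanish, so I would dismiss that case at the outset and assume $S := \sum_j a_j^q \in (0, \infty)$.

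Next I would normalize by introducing $b_j := a_j^q / S \geq 0$, so that $\sum_j b_j = 1$ and in particular $b_j \in [0,1]$ for every $j$. Rewriting the left-hand side in terms of the $b_j$'s gives $a_j^p = (a_j^q)^{r} = S^{r} b_j^{r}$, and the desired bound is equivalent to $\sum_j b_j^{r} \leq 1$.

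The core step is then the pointwise inequality $b_j^{r} \leq b_j$, which holds because $b_j \in [0,1]$ and $r \geq 1$; summing over $j$ yields $\sum_j b_j^{r} \leq \sum_j b_j = 1$. Multiplying by $S^{r}$ and taking $p$-th roots,
\begin{equation*}
    \left( \sum_{j=1}^\infty a_j^p \right)^{1/p} \leq S^{r/p} = S^{1/q} = \left( \sum_{j=1}^\infty a_j^q \right)^{1/q},
\end{equation*}
which is the claim. There is no real obstacle here, only a bookkeeping point: the normalization is legitimate precisely because the right-hand side is assumed finite and non-zero, and the passage from finite partial sums to the infinite series is immediate since all terms are non-negative (monotone convergence of partial sums suffices, so no exchange of limits is required).
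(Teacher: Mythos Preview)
Your proof is correct and is the standard normalization argument for this well-known inequality. The paper itself does not prove this lemma at all; it merely records it as a ``standard estimate'' for the reader's convenience before using it in the proof of Proposition~\ref{prop:constant}, so there is nothing to compare against.
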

The proof of \RefProp{prop:constant} now reads as follows:
\begin{proof}
\emph{Step 1.}
To show the monotonicity of $C_{d,\lambda}(r_{\alpha,\bm{\beta}})$ we simply apply Jensen's inequality with the exponent $p=\mu/\lambda \geq 1=q$. The continuous dependence on $\lambda$ follows from the fact that $\ell_p$-sequence norms are continuous w.r.t.\ $p$ and the representation \link{eq:scaling} can easily be verified since the squared initial error on $\SI_{I_d}(F_d(r_{\alpha,\bm{\beta}}))$ is given by $\beta_0^d$; see \link{eq:M2Id}.

\emph{Step 2.}
For $\lambda=1$ the identities proven in \RefLem{lem:SdMd} together with \link{eq:transform} show that 
$C_{d,1}(r_{\alpha,\bm{\beta}})$ equals
\begin{align*}
	\sum_{\bm{0}\neq \bsh \in \Z^d}\frac{\M_d(\bsh)!}{\# S_d} \, r^{-1}_{\alpha,\bm{\beta}}(\bsh)
	= \sum_{\bsh \in \Z^d}\frac{\M_d(\bsh)!}{\# S_d} \, r^{-1}_{\alpha,\bm{\beta}}(\bsh) - \beta_0^d
	&= \sum_{\bsk \in \nabla_d} r^{-1}_{\alpha,\bm{\beta}}(\bsk) - \beta_0^d \\
	&= \beta_0^d \left( \frac{M_{2,d}(K_{d,I_d})}{S_d(K_{d,I_d})} - 1 \right) 
\end{align*}
which agrees with \link{eq:repres}. To prove the lower bound we note that for every $m\in\N$ the vector $\bm{m}=(m,\ldots,m)$ as well as its negative belong to the set $\nabla_d$ such that
\begin{equation*}
	\sum_{\bsk \in \nabla_d} r^{-1}_{\alpha,\bm{\beta}}(\bsk)
	\geq 2 \, r_{\alpha,\bm{\beta}}^{-1}(\bm{m}) = 2 \, \left[ \beta_1 \, R(m)^{-2\alpha} \right]^d.
\end{equation*}

\emph{Step 3.} The proof of the remaining lower bounds is based on the arguments already used in the proof of \autoref{thm:rmse}. There we defined sets of indices $\J_d\subset \Z^d \setminus \{\bm{0}\}$ whose elements behave well under permutations $P\in\S_d$. Using essentially the same calculations we obtain the bounds
\begin{equation*}
	C_{d,\lambda}(r_{\alpha,\bm{\beta}})
		\geq \beta_0^d \left(2 \sum_{\ell = 1}^{d} \binom{d}{\ell}^{1-1/\lambda} \left[ \frac{\beta_1}{\beta_0} \right]^{\ell/\lambda} \NR{\alpha\,\ell/\lambda} \right)^{\lambda}
\end{equation*}
if $\# I_d=d$, and 
\begin{align*}
	C_{d,\lambda}(r_{\alpha,\bm{\beta}})
	&\geq \beta_0^d \left[ \left( 1 + 2 \left[ \frac{\beta_1}{\beta_0} \right]^{1/\lambda} \NR{\alpha/\lambda} \right)^{d-\# I_d} - 1 \right]^{\lambda} \\
	&\qquad\qquad \qquad \times \left( 1 + 2 \sum_{\ell = 1}^{\# I_d} \binom{\# I_d}{\ell}^{1-1/\lambda} \!\left[ \frac{\beta_1}{\beta_0} \right]^{\ell/\lambda} \NR{\alpha\,\ell/\lambda} \right)^{\lambda}
	\end{align*}
if $\# I_d<d$ (where the second factor is not present for $\#I_d=0$). Note that both the last formulas hold true for general $\lambda \geq 1$ (but observe that $\NR{x}$ is infinite for $x\leq 1/2$) and that for every $\ell \in \N$ we have the lower estimate
\begin{equation*}
	\NR{\alpha \, \ell/\lambda} 
	= \sum_{m=1}^\infty R(m)^{-2\alpha \, \ell/\lambda} 
	\geq \left[ \frac{1}{R(1)^{2\alpha}} \right]^{\ell/\lambda}.
\end{equation*}
This proves the lower bounds for the case $\lambda=1$. 
Thus we are left with the case $\lambda>1$. Here the bound for $I_d=\emptyset$ is obvious. 
For $\# I_d>0$ we use Jensen's inequality (\RefLem{lem:jensens}) with $q=1-1/\lambda<1=p$ and the binomial theorem to obtain
\begin{align*}
	\sum_{\ell = 1}^{\# I_d} \binom{\# I_d}{\ell}^{1-1/\lambda}\! \left[ \frac{\beta_1}{\beta_0\, R(1)^{2\alpha}} \right]^{\ell/\lambda} \! 
	&= \left( \left[ \sum_{\ell = 1}^{\# I_d} \left( \binom{\# I_d}{\ell} \left[ \frac{\beta_1}{\beta_0\, R(1)^{2\alpha}} \right]^{\ell/(\lambda-1)} \right)^{1-1/\lambda} \right]^{1/(1-1/\lambda)} \right)^{1-1/\lambda} \\
	&\geq \left( \sum_{\ell = 1}^{\# I_d} \binom{\# I_d}{\ell} \left[ \frac{\beta_1}{\beta_0\, R(1)^{2\alpha}} \right]^{\ell/(\lambda-1)} \right)^{1-1/\lambda} \\
	&= \left( \left( 1 + \left[ \frac{\beta_1}{\beta_0\, R(1)^{2\alpha}} \right]^{1/(\lambda-1)} \right)^{\#I_d} -1 \right)^{1-1/\lambda}.
\end{align*}
Consequently, we have
\begin{align}
	&\left(2 \sum_{\ell = 1}^{\#I_d} \binom{\#I_d}{\ell}^{1-1/\lambda} \left[ \frac{\beta_1}{\beta_0} \right]^{\ell/\lambda} \NR{\alpha\,\ell/\lambda} \right)^{\lambda} 
	\geq 2^\lambda \left[ \left( 1 + \left[ \frac{\beta_1}{\beta_0\, R(1)^{2\alpha}} \right]^{1/(\lambda-1)} \right)^{\#I_d} -1 \right]^{\lambda-1} \label{est:bin_sum}
\end{align}
which yields the bound for $\#I_d=d$. For $0<\#I_d<d$ we apply Jensen's inequality once again (this time with $q=1<\lambda=p$) and derive
\begin{align*}
	&\left( 1 + 2 \sum_{\ell = 1}^{\# I_d} \binom{\# I_d}{\ell}^{1-1/\lambda} \left[ \frac{\beta_1}{\beta_0} \right]^{\ell/\lambda} \NR{\alpha\,\ell/\lambda} \right)^{\lambda} 
	\geq  1 + \left( 2 \sum_{\ell = 1}^{\# I_d} \binom{\# I_d}{\ell}^{1-1/\lambda} \left[ \frac{\beta_1}{\beta_0} \right]^{\ell/\lambda} \NR{\alpha\,\ell/\lambda} \right)^{\lambda}.
\end{align*}
The assertion now follows from \link{est:bin_sum}.

\emph{Step 4.} It remains to show the upper bound \link{est:upper_bound}. Therefore let $\lambda$, $A$, and $\gamma$ be given and note that the restrictions on the choice of $A$ are equivalent to
\begin{equation*}
	\alpha - A > \frac{1}{2}
	\qquad \text{and} \qquad
	\frac{A}{\lambda - 1} > 1/2.
\end{equation*}
Hence the quantities $\NR{\alpha-A}$ (which appears in $C_{d,1}(r_{\alpha-A,(\beta_0, \beta_1 \gamma)})$), as well as $\NR{A/(\lambda-1)}$ (which appears in the other factor), are finite. Applying H\"older's inequality yields
\begin{align*}
	C_{d,\lambda}(r_{\alpha,(\beta_0,\beta_1)})^{1/\lambda} 
	&= \sum_{\bm{0}\neq \bsh \in \Z^d} \left[ \frac{\M_d(\bsh)!}{\# \S_d} \, r^{-1}_{\alpha,(\beta_0,\beta_1)}(\bsh) \, r_{A,(1,1/\gamma)}(\bsh) \right]^{1/\lambda} r^{-1/\lambda}_{A,(1,1/\gamma)}(\bsh) \\
	&\leq \left( \sum_{\bm{0}\neq \bsh \in \Z^d} \frac{\M_d(\bsh)!}{\# \S_d} \, r^{-1}_{\alpha-A,(\beta_0,\beta_1\gamma)}(\bsh) \right)^{1/\lambda} \left( \sum_{\bm{0}\neq \bsh \in \Z^d} r^{-1/(\lambda-1)}_{A,(1,1/\gamma)}(\bsh) \right)^{1-1/\lambda},
\end{align*}
since $r^{-1}_{\alpha,(\beta_0,\beta_1)}(\bsh) \, r_{A,(1,1/\gamma)}(\bsh) = r^{-1}_{\alpha-A,(\beta_0,\beta_1\gamma)}(\bsh)$ for every $\bsh\in\Z^d$. Now the first sum obviously equals $C_{d,\lambda}(r_{\alpha-A,(\beta_0,\beta_1\gamma)})$, whereas the second sum can be calculated in the usual way using the tensor product structure of $r$:
\begin{align*}
	\sum_{\bm{0}\neq \bsh \in \Z^d} r^{-1/(\lambda-1)}_{A,(1,1/\gamma)}(\bsh) 
	&= \sum_{h_1 \in\Z}\cdots \sum_{h_d \in \Z} \prod_{\ell=1}^d r^{-1/(\lambda-1)}_{A,(1,1/\gamma)}(h_\ell) - 1 \\
	&= \left[ 1 + 2 \, \gamma^{-1/(\lambda-1)} \, \NR{A/(\lambda-1)} \right]^d - 1.
\end{align*}
This completes the proof.
\end{proof}

\section*{Acknowledgments}
\addcontentsline{toc}{section}{Acknowledgments}
This research is part of a project funded by the Research Fund KU Leuven. The third author has been supported by Deutsche Forschungsgemeinschaft DFG (DA~360/19-1).
In addition, the authors are grateful to the two anonymous reviewers who helped to improve the paper.

\addcontentsline{toc}{chapter}{References}
\bibliographystyle{spmpsci}      


\end{document}